\newcommand{\alert}[1]{{\color{mycolor1}#1}}
\definecolor{mycolor1}{rgb}{0.00000,0.44700,0.74100}
\definecolor{mycolor2}{rgb}{0.8500, 0.3250, 0.0980}
\definecolor{mycolor3}{rgb}{0.9290, 0.6940, 0.1250}
\definecolor{mycolor4}{rgb}{0.4940, 0.1840, 0.5560}
\renewcommand{\emph}[1]{{\it #1}}
\newcommand{\ZZ}{\mathbb{Z}}
\newcommand{\QQ}{\mathbb{Q}}
\newcommand{\RR}{\mathbb{R}}
\newcommand{\CC}{\mathbb{C}}
\newcommand{\PP}{\mathbb{P}}
\newcommand{\calF}{{\mathcal F}}
\newcommand{\calFhat}{\hat{\mathcal F}}
\newcommand{\calG}{{\mathcal G}}
\newcommand{\calGhat}{\hat{\mathcal G}}
\newcommand{\calH}{{\mathcal H}}
\newcommand{\calHhat}{\hat{\mathcal H}}
\renewcommand{\P}{\mathcal{P}}
\newcommand{\U}{\mathcal{U}}
\renewcommand{\L}{\mathcal{L}}
\newcommand{\W}{\mathcal{W}}
\newcommand{\HH}{\mathscr H}
\newcommand{\A}{\mathscr A}
\newcommand{\OO}{\mathscr O}
\newcommand{\J}{\mathscr I}
\newcommand{\LL}{\mathscr L}
\newcommand{\Pol}{\mathscr P}
\newcommand{\cone}{\textup{cone}}
\DeclareMathOperator{\Vol}{\rm Vol}
\DeclareMathOperator{\im}{\rm im}
\DeclareMathOperator{\maxspec}{\rm MaxSpec}
\DeclareMathOperator{\conv}{\rm conv}
\DeclareMathOperator{\Hom}{\rm Hom}
\DeclareMathOperator{\Cl}{\rm Cl}
\DeclareMathOperator{\Pic}{\rm Pic}
\DeclareMathOperator{\MV}{\rm MV}
\DeclareMathOperator{\coker}{\rm coker}
\DeclareMathOperator{\val}{\rm val}
\newcommand{\ideal}[1]{\langle {#1} \rangle}
\newcommand{\pair}[1]{\langle {#1} \rangle}
\newcommand{\f}{\hat{f}}
\newcommand{\h}{\hat{h}}
\newcommand{\g}{\hat{g}}
\newcommand{\D}{\delta}
\newcommand{\Puis}{\CC \{\!\{\tau\}\!\} }
\newcommand{\Pow}{\CC [[\tau]] }
\newcommand{\z}{\zeta}
\newtheorem{thm}{Theorem}[section]
\newtheorem{prop}[thm]{Proposition}
\newtheorem{lem}[thm]{Lemma}
\newtheorem{cor}[thm]{Corollary}
\theoremstyle{definition}
\newenvironment{example}
  {\pushQED{\qed}\examplex}
  {\popQED\endexamplex}
\newenvironment{experiment}
  {\pushQED{\qed}\experimentx}
  {\popQED\endexperimentx}
\newenvironment{convention}
  {\pushQED{\qed}\conventionx}
  {\popQED\endconventionx}
\newtheorem{remark}[thm]{Remark}
\definecolor{newgreen}{rgb}{0.0, 0.5, 0.0}
\definecolor{oldred}{rgb}{0.5, 0.0, 0.0}
\newcommand{\ffrac}[2]{\displaystyle\frac{#1}{#2}}
\newcommand{\hInc}{\widehat{\mathcal{V}}_{\A, L}}
\title{Polyhedral homotopies in Cox coordinates}
\author{T.~Duff}
\address{Tim Duff, School of Mathematics, 
            Georgia Institute of Technology, 686 Cherry St.~NW, Atlanta, GA 30308,  USA}
\email{tduff3@gatech.edu}
\urladdr{https://timduff35.github.io/timduff35/}
\author{S.~Telen}
\address{Simon Telen, Max Planck Institute for Mathematics in the Sciences, Leipzig, Germany}
\email{simon.telen@mis.mpg.de}
\urladdr{https://simontelen.webnode.com/}
\author{E.~Walker}
\address{Elise Walker, Department of Mathematics,
         Texas A\&M University, College Station, Texas 77843,  USA}
\email{walkere@math.tamu.edu}
\urladdr{http://www.math.tamu.edu/\~{}walkere}
\author{T.~Yahl}
\address{Thomas Yahl, Department of Mathematics,
         Texas A\&M University, College Station, Texas 77843,  USA}
\email{thomasjyahl@math.tamu.edu}
\urladdr{http://www.math.tamu.edu/\~{}thomasjyahl/}
\thanks{Research of Walker and Yahl supported in part by Simons Collaboration Grant for Mathematics 636314. }
\thanks{Work of Duff supported in part by the National Science Foundation under grant DMS-1719968.}
\subjclass[2010]{13P15, 14M25, 68W30}
\keywords{toric varieties; numerical algebraic geometry; polynomial systems; algebraic geometry; Cox rings; computer algebra; homotopy continuation}
\begin{document}


\begin{abstract}
We introduce the Cox homotopy algorithm for solving a sparse system of polynomial equations on a compact toric variety $X_\Sigma$. The algorithm lends its name from a construction, described by Cox, of $X_\Sigma$ as a GIT quotient $X_\Sigma = (\CC^k \setminus Z)\sslash G$ of a quasi-affine variety by the action of a reductive group. 
Our algorithm tracks paths in the total coordinate space $\CC^k$ of $X_\Sigma$ and can be seen as a homogeneous version of the standard polyhedral homotopy, which works on the dense torus of $X_\Sigma$. It furthermore generalizes the commonly used path tracking algorithms in (multi)projective spaces in that it tracks a set of homogeneous coordinates contained in the $G$-orbit corresponding to each solution. 
The Cox homotopy combines the advantages of polyhedral homotopies and (multi)homogeneous homotopies, tracking only mixed volume many solutions and providing an elegant way to deal with solutions on or near the special divisors of $X_\Sigma$. In addition, the strategy may help to understand the deficiency of the root count for certain families of systems with respect to the BKK bound.
\end{abstract}

\maketitle

\section{Introduction}

In this paper, our aim is to solve systems of polynomial equations $\calFhat(x)=0,$ where $\calFhat=(\f_1(x), \ldots, \f_n(x))$ and $x=(x_1,\ldots , x_n)$. More specifically, we are concerned with the case where some isolated solutions of $\calFhat (x)=0$ lie near infinity. To make this precise, one must consider solutions in a suitable compactification of $\CC^n.$ Common choices for the compactification are the complex projective space $\PP^n$ or, more generally, a product of projective spaces $\PP^{n_1} \times \cdots \times \PP^{n_s}$ where $n_1+\cdots n_s=n.$ In each of these compactifications, the number of solutions to a generic system is a fixed number given by B\'{e}zout's theorem or its extension to a product of projective spaces. This observation gives the basis for numerical homotopy continuation methods---specifically, the {(homogeneous) total degree homotopy} and more general {multihomogeneous homotopies.} These may be used to find all isolated solutions to $\calFhat$ starting from a suitably generic {start system.} Conventionally, the solutions to the (multi)homogeneous start system all lie in $\mathbb{C}^n.$ To find solutions to $\calFhat$ at infinity, it is standard to use homogeneous coordinates and work in a generic {affine patch.}

In practice, infinite or nearly-infinite solutions to our {target system} $\calFhat$ present challenges for homotopy continuation. When tracking paths affinely, solutions of large magnitude are hard to estimate accurately. Even when homogeneous coordinates are used, these solutions are often singular or nearly-singular. Moreover, a large discrepancy between the number of start and target solutions may result in wasted computational resources.

The usual multihomogeneous start systems depend only on the degrees of $\f_1, \ldots, \f_n.$ In contrast, the {polyhedral homotopy} introduced in \cite{HS95,verschelde1994homotopies} takes the Newton polytopes of $\f_1, \ldots, \f_n$ into account.\footnote{Here, it is more natural to let $\f_i$ be Laurent polynomials, as we will do in later sections.} In this setting, the {polyhedral endgame}~\cite{huber1998polyhedral} may be used to detect solutions at infinity by numerically extrapolating coefficients of power series solutions. The number of start solutions for the polyhedral homotopy is given by the {BKK bound} from the celebrated Bernstein-Khovanskii-Kushnirenko (BKK) theorem~\cite{bernstein,kouchnirenko1976polyedres}.

In this paper, we propose the {Cox homotopy,} which combines salient features of the multihomogeneous and polyhedral homotopies. A schematic summarizing our approach is given in Figure~\ref{fig:schematic}. Our chosen compactification is an $n$-dimensional, compact, normal toric variety $X_{\Sigma },$ where the polyhedral fan $\Sigma$ refines the normal fan of each Newton polytope of $(\f_1, \ldots, \f_n).$ We may then regard a polynomial system as a section of a rank-$n$ vector bundle on $X_\Sigma .$ This section has a well-defined vanishing locus, which for a generic system consists of BKK-many isolated points (see Theorem~\ref{thm:bkk}).
\begin{figure}
\small
\begin{tabular}{c||c|c|c}
solution space & $\PP^n$                        & $\PP^{n_1} \times \cdots \times \PP^{n_s}$       & $X_\Sigma$                              \\ \hline
root count             & B\'ezout & multihomogeneous B\'ezout & BKK                              \\
graded ring            & $\CC[x_0, \ldots, x_n]$        & $\bigotimes_{i=1}^s \CC[x_{i0},\ldots,x_{in_i}]$ & $\CC[x_{\rho} ~|~ \rho \in \Sigma(1)]$   \\
homotopy               & homogeneous homotopy           & multihomogeneous homotopy                       & {Cox homotopy}                            
\end{tabular} \\
\normalsize
\caption{Schematic comparing (multi)homogeneous and Cox homotopies.}
\label{fig:schematic}
\end{figure}

This vanishing locus can be described globally by homogenizing the $\f_i$ to the {total coordinate ring} or {Cox ring} $S = \CC[x_\rho ~|~ \rho \in \Sigma(1)]$ of $X_\Sigma .$ The polynomial ring $S$, together with its grading by the divisor class group of $X_\Sigma$ and the irrelevant ideal, corresponds to the geometric construction of $X_\Sigma$ as a GIT quotient $X_\Sigma = (\CC^{k} \setminus Z) \sslash G$ of a quasi-affine variety by the action of an algebraic reductive group $G$ (much like the Proj-construction for $\PP^n$). Here we write $k$ for the number of rays in the fan $\Sigma$. This construction was described by Cox in \cite{cox1995homogeneous}. To represent points in $X_\Sigma,$ we use the global coordinates $x_\rho $ from the Cox construction, which we will refer to as {Cox coordinates}. In analogy with choosing an affine patch in (multi)homogeneous homotopies, we intersect the $G$-orbit corresponding to a point on $p \in X_\Sigma$ with a linear space of complementary dimension in $\CC^{k}$ to pick out finitely many sets of Cox coordinates representing $p$.

Our Cox homotopy offers the following advantages:
  \begin{itemize}
  \item[1)] By working in a compact space, we reduce the risk of prematurely truncating paths (see Experiments \ref{exp:computervision} and \ref{exp:weighted}).
  \item[2)] We are flexible in our choice of linear space. We may mitigate poor scaling or ill-conditioning by choosing a random linear space or the normal space to a $G$-orbit (see Experiment \ref{exp:hirzebruch}).
\item[3)] We retain the advantage of polyhedral homotopy that the BKK bound never exceeds any multihomogeneous B\'{e}zout bound, and may be substantially smaller (see Experiments \ref{exp:computervision}-\ref{exp:BS}).
\item[4)] Generalizing the multihomogeneous case, a solution at infinity lies on a {divisor} where some Cox coordinate equals $0.$ This can be used to heuristically establish when there are fewer finite solutions than the BKK bound, by investigating certain face systems (see Experiment \ref{exp:BS}).
  \end{itemize}

Although the Cox construction is a natural generalization of the familiar geometric quotient $ \PP^n = (\CC^{n+1} \setminus \{0\})/ (\CC \setminus \{0\})$, we believe the application to homotopy continuation is novel. Our work complements the recent use of Cox coordinates for dealing with non-toric solutions in a robust manner in numerical algebraic {normal form methods} \cite{telen2020thesis,telen2019numerical,bender2020toric}. Other closely-related work includes the aforementioned polyhedral endgame~\cite{huber1998polyhedral} and the use of toric compactifications in complexity analysis for sparse polynomial system solving~\cite{malajovich2019complexity,malajovich2020complexity}.

The paper is organized as follows.
In Section \ref{sec:prelim} we recall the necessary background on homotopy continuation, toric varities, and the Cox construction. Section \ref{sec:orbitdeg} describes the closure of $G$-orbits in $\PP^{k}$. We include a new combinatorial description of the degrees of these projective varieties (Proposition \ref{prop:orbitdeg}). These results are used in Section \ref{sec:mainthm}, where we state our main theorem (Theorem \ref{thm:coxhomotopy}) and we present the Cox homotopy algorithm (Algorithm \ref{alg:mainalg}), together with an algorithm which generalizes the {orthogonal patching} strategy proposed in \cite{hauenstein2018adaptive} for classical homogeneous homotopies (Algorithm \ref{alg:orthslice}). In Section \ref{sec:numexps} we give examples which demonstrate the advantages of our algorithms and compare the Cox homotopy to the polyhedral homotopy. 

\section{Preliminaries} \label{sec:prelim}

\subsection{Background on homotopy continuation}\label{subsec:homotopy}

A great variety of homotopy continuation methods exist, all of which adhere to the same setup---a system whose solutions are known (a \alert{start system}) is used to solve some other system (a \alert{target system}).
  Popular implementations include \texttt{Bertini}~\cite{Bertini}, \texttt{HOM4PS-3}~\cite{HOM4PS3}, \texttt{HomotopyContinuation.jl}~\cite{breiding2018homotopycontinuation}, \texttt{NAG4M2}~\cite{NAG4M2}, and \texttt{PHCpack}~\cite{PHCpack}. The numerical predictor-corrector methods lying at the core of homotopy continuation are treated at length in~\cite{AG90}. In the context of polynomial systems, the standard references are~\cite{Mor87, SW05, BertiniBook}.

For our purposes, a \alert{homotopy} $\calH(x;\tau )$ in variables $x=(x_1,\ldots , x_n)$ consists of polynomials $H_1 (x;\tau), \ldots , H_n (x; \tau)$ depending on an additional \alert{tracking parameter} $\tau$.
We use the convention that the start and target system are given by $\calG(x) = \calH(x,1)$ and $\calF (x) = \calH(x,0),$ respectively.
Thus, from any start solution $x_0\in \CC^n$ satisfying $\calG (x_0)=0,$ homotopy continuation seeks to numerically approximate solution paths $x(\tau )$ satisfying the initial-value problem
\begin{equation}
\label{eq:ivp}
  \begin{split}
  \ffrac{\partial \, \calH}{\partial \, x} \, \ffrac{\partial \, x}{\partial \, \tau} =
  - \ffrac{\partial \, \calH}{\partial \, \tau}, \qquad \qquad   x(1)=x_0. 
\end{split}
\end{equation}
In the terminology of \cite[Def.~4.5]{regen11}, a solution path $x(\tau)$ is said to be \alert{trackable} on an interval $I\subset [0,1]$ if the Jacobian $\partial \, \calH / \partial x$ is nonsingular at $(x(\tau), \tau)$ for all $\tau\in I.$
Typically, a randomization scheme such as the well-known \alert{gamma trick} is used so that $x (\tau )$ is trackable on $(0,1]$ with probability-one.
If $x(\tau)$ and $\tilde{x} (\tau)$ are distinct paths trackable on $I\subset [0,1],$ then the paths are disjoint in the sense that $x (\tau) \ne \tilde{x} (\tau )$ for any $\tau \in I.$ This follows from the inverse function theorem. 
  
A trackable path on $(0,1]$ converges to the \alert{endpoint} $\lim_{\tau \to 0^+} x(\tau ),$ provided that this limit exists, and otherwise \alert{diverges.}
To eliminate divergent solution paths, we may instead compute endpoints in some compactification of $\CC^n$. It is common practice to work in (multi)projective spaces. 

The first step for doing homotopy continuation in (multi)projective space is to \alert{homogenize} the equations in an appropriate way. We will use the following convention in the rest of this paper to distinguish between homogeneous and non-homogeneous notation. 
\begin{convention}
\label{convention:xt}
We denote by $t=(t_1,\ldots , t_n)$ a set of affine/toric variables and by $x=(x_1, \ldots , x_k)$ a set of homogeneous variables. Equations or systems of equations that are not homogeneous will be indicated by a circumflex, which is dropped after homogenization. For instance, homogenizing the system $\calFhat(t) = 0$ given by $\f_1(t)= \cdots = \f_n(t) = 0$, we obtain $\calF(x) = 0$ given by $f_1(x)= \cdots = f_n(x) = 0$.
\end{convention}

By homogenizing the equations $\f_1 = \cdots = \f_n = 0$ in an appropriate way, one is essentially extending the given relations on $\CC^n$ to relations on $\PP^{n_1} \times \cdots \times \PP^{n_s}$. The obtained equations have well-defined vanishing loci in $\PP^{n_1} \times \cdots \times \PP^{n_s}$ because of their homogeneity with respect to the appropriate multigrading. This is equivalent to reinterpreting each of the $\f_i$ as a section of a line bundle on (multi)projective space, such that the given equation corresponds to the trivialization of this section in one of the affine charts. The graded pieces of the multihomogeneous coordinate ring of $\PP^{n_1} \times \cdots \times \PP^{n_s}$ are in one-to-one correspondence with (isomorphism classes of) line bundles. For instance, for each $d \in \ZZ$, $\OO_{\PP^n}(d)$ is the line bundle on $\PP^n$ whose global sections are represented by homogeneous polynomials (in the standard sense) of degree $d$. Here's an example which shows the influence of the choice of compactification/homogenization on the solutions to the system. 

\begin{example}
\label{ex:motivating1}
Consider $\calFhat(t) = (\f_1(t),\f_2(t)) = 0,$ where
\begin{align*}  \label{eq:motivating1}
\f_1 &= 1 + t_1 + t_2 + t_1t_2 + t_1^2t_2 + t_1^3t_2, \qquad 
\f_2 = 2 + t_2 + t_1t_2 + t_1^2t_2. 
\end{align*}
The system $\calFhat(t)=0$ has three solutions in the algebraic torus $(\CC^*)^2\subset \CC^2$, given by $(t_1,t_2)  = (-1,-2), (e^{-\sqrt{-1} \frac{\pi}{3}}, -e^{\sqrt{-1} \frac{\pi}{3}}), (e^{\sqrt{-1} \frac{\pi}{3}}, -e^{-\sqrt{-1} \frac{\pi}{3}})$. We consider compactifying the solution space via the usual embeddings $\CC^2 \hookrightarrow \PP^2$ and $\CC^2 \hookrightarrow \PP^1 \times \PP^1.$

A \alert{total-degree} homotopy may be specified in homogeneous coordinates on $\PP^2$ by \[
\calH(x_0,x_1,x_2; \tau) = \gamma \, (1- \tau) \, \calF(x_0,x_1,x_2) + \tau \, \calG(x_0, x_1, x_2),
\]
where $\gamma \in \CC^*$ is a generic constant, and the homogeneous start and target equations are given by $\calG(x)=(x_1^4 - x_0^4, x_2^3-x_0^3)$ and $f_1 (x_0, x_1, x_2) = x_0^4 \, \f_1(x_1/x_0, x_2/x_0) \in \Gamma(\PP^n,\OO_{\PP^n}(4))$, $f_2 (x_0, x_1, x_2) = x_0^3 \, \f_3(x_1/x_0, x_2/x_0) \in \Gamma(\PP^n,\OO_{\PP^n}(3))$  respectively.
To get a unique representative for each point in $\PP^2,$ we may augment $\calH$ with a generic equation of the form $* \, x_0 + * \, x_1 + * \, x_2 = 1,$ representing an \alert{affine patch} on $\PP^2.$
There are $12$ start solutions.
Genericity of $\gamma $ and the patch implies that solution paths satisfying Equation~\eqref{eq:ivp} are trackable, and we may recover homogeneous representatives of the toric solutions.
The $9$ remaining endpoints are $(x_0:x_1:x_2) = (0:0:1)$ (multiplicity-$6$) and $(0:1:0)$ (multiplicity-$3$).

For solutions in $\PP^1 \times \PP^1,$ the system $\calFhat$ is homogenized in each variable separately.
Thus, the homogeneous target system becomes $ \calF = ( x_0^3 \, y_0 \, \f_1(x_1/x_0, y_1/y_0), x_0^2 \, y_0 \, \f_2(x_1/x_0, y_1/y_0)) = (f_1, f_2)$ with $f_1 \in \Gamma(\PP^1 \times \PP^1, \OO_{\PP^1 \times \PP^1}(3,1))$, $f_2 \in \Gamma(\PP^1 \times \PP^1, \OO_{\PP^1 \times \PP^1}(2,1))$ and we may work on a patch defined by $*\, x_0 + * \, x_1 = 1$ and $*\, y_0 + * \, y_1 = 1.$
The number of start solutions is now the corresponding multihomogeneous B\'{e}zout bound~\cite[Ch.~8, pp.~126--130]{SW05}, which in this case equals five.
Besides the toric solutions, the multihomogeneous homotopy recovers an additional solution at infinity: $(1:0) \times (0:1)$ with multiplicity 2.
\end{example}
Example~\ref{ex:motivating1} gives an instance of the \emph{family} $\calFhat(t_1,t_2;c) = (\f_1(t_1,t_2;c),\f_2(t_1,t_2;c))$ of systems with fixed monomial supports:
$$ \f_1 = c_{11} + c_{12} t_1 + c_{13} t_2 + c_{14} t_1t_2 + c_{15} t_1^2t_2 + c_{16} t_1^3t_2, \quad \f_2 = c_{21} + c_{22}t_2 + c_{23}t_1t_2 +c_{24} t_1^2t_2.$$
A member of this family with generic coefficients $c$ also has $3$ toric solutions. 
Moreover, in either the $\PP^2$-compactification or the $\PP^1 \times \PP^1$-compactification, the solutions at infinity remain exactly the same, respectively, for any generic choice of $c$.
Thus, it is natural to view these solutions at infinity as an artifact of the chosen compactification.

If we instead use the toric compactification $X_\Sigma$ proposed in this paper, then we may homogenize $(\f_1, \f_2)$ as described in Section~\ref{subsec:homogenization}. For generic choices of the $c_{ij}$, these homogenized equations only define 3 solutions on $X_\Sigma$, all contained in its dense torus. For this example, the toric variety $X_\Sigma$ turns out to be a Hirzebruch surface, as we will later see.

\subsection{Toric varieties and the Cox construction} \label{subsec:coxconstruction}
In this subsection we recall the construction of a toric variety as a GIT quotient. This construction was described by Cox in \cite{cox1995homogeneous}, and it is referred to as the \alert{Cox construction}. We should mention that the result was described earlier in the analytic category by Audin, Delzant and Kirwan, see \cite[Chapter 6]{audin2012topology} and references therein. For basic theory on toric varieties, the reader is referred to \cite[Chapters 1-4]{CLS} or \cite{fulton1993introduction}. Paragraph 5.5.1 in \cite{telen2020thesis} contains a short introduction with some worked out examples. 

Let $T = (\CC^*)^n$ be the \alert{algebraic torus} of dimension $n$. Its character and cocharacter lattices are denoted by $M = \Hom_\ZZ(T, \CC^*) \simeq \ZZ^n$ and $N = \Hom_\ZZ(M,\ZZ)$ respectively. A normal toric variety $X$ comes from a rational polyhedral fan $\Sigma$ in $N_\RR = N \otimes_\ZZ \RR$. We will denote the set of cones of dimension $d$ in $\Sigma$ by $\Sigma(d)$. We are interested in the case where $\Sigma$ is the \alert{normal fan} $\Sigma_\Pol$ of a full-dimensional lattice polytope $\Pol \subset M_\RR = M \otimes_\ZZ \RR$. Such fans are \alert{complete}, which means that $\bigcup_{\sigma \in \Sigma} \sigma = N_\RR$, and the corresponding toric variety $X$ is compact. We will sometimes denote $X = X_\Sigma$ to emphasize the correspondence between $X$ and its fan. 

Each $\rho_i \in \Sigma(1)$ has a unique primitive ray generator $u_i \in N$. It is convenient to collect the $u_i$ in a matrix 
$$ F = [u_1 ~ \cdots ~ u_k] \in \ZZ^{n \times k}.$$
In our context, the $u_i$ represent inward pointing \alert{facet normals} of the polytope $\Pol$. For this reason, we call $F$ the \alert{facet matrix}. This matrix represents a lattice homomorphism $F: N' \rightarrow N$, where $N' = \ZZ^k$, which is compatible with the fans $\Sigma'$ and $\Sigma$ in $N_\RR'$ and $N_\RR$ respectively. Here $\Sigma'$ is the fan of $\CC^k$ (i.e.\ the positive orthant in $\RR^k$ and all its faces), with some cones missing. It follows that $F$ gives a toric morphism $\pi: X_{\Sigma'} \rightarrow X_\Sigma$, where $X_{\Sigma'} = \CC^k \setminus Z$ and $Z$ is a union of coordinate subspaces. In this setting, the affine space $\CC^k$ is called the \alert{total coordinate space} and $Z$ is the \alert{base locus}. The base locus is defined by a monomial ideal $B$ in the coordinate ring $S = \CC[x_1,\ldots,x_k]$ of $\CC^k$:
$$ B = \left \langle \prod_{i \text{ s.t.\ } \rho_i \not \subset \sigma} x_i ~|~ \sigma \in \Sigma(n) \right \rangle \subset S \quad \text{and} \quad Z = V_{\CC^k}(B).$$
The ideal $B$ is also called the \alert{irrelevant ideal} of $S$.
Restricting the morphism $\pi$ to the torus $(\CC^*)^k$ we get the Laurent monomial map
\begin{equation} \label{eq:monmap}
\pi |_{(\CC^*)^k} = F \otimes_\ZZ \CC^* : (\CC^*)^k \rightarrow T
\end{equation}
where $(z_1,\ldots,z_k) \overset{\pi}{\mapsto} (z^{F_{1,:}}, \ldots, z^{F_{n,:}})$ (this uses the short notation $z^a = z_1^{a_i}\cdots z_k^{a_k}$ and $F_{i,:}$ for the $i$-th row of $F$). The kernel of $\pi |_{(\CC^*)^k}$ (as a group homomorphism) is a subgroup $G \subset (\CC^*)^k$ which acts on $\CC^k \setminus Z$ and the morphism $\pi$ is constant on $G$-orbits. The following theorem uses some terminology for GIT quotients from \cite[Section 5.0]{CLS}. 
\begin{thm}[\cite{cox1995homogeneous}] \label{thm:cox}
The morphism $\pi: \CC^k \setminus Z \rightarrow X_\Sigma$ coming from $F = [u_1 ~ \cdots ~ u_k]$ is an almost geometric quotient for the action of $G$ on $\CC^k \setminus Z$. Moreover, the open subset $U \subset X_\Sigma$ for which $\pi |_{\pi^{-1}(U)}$ is a geometric quotient is the largest simplicial toric subvariety of $X$, which is such that $(X_\Sigma \setminus U)$ has codimension at least 3 in $X_\Sigma$.
\end{thm}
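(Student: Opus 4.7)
The plan is to follow Cox's original local strategy~\cite{cox1995homogeneous}. For each cone $\sigma \in \Sigma$, let $\widehat\sigma \subset \RR^k$ denote the cone generated by those standard basis vectors $e_i$ with $\rho_i \subset \sigma$. The collection $\{\widehat\sigma : \sigma \in \Sigma\}$ is a subfan of the coordinate fan of $\CC^k$ whose toric variety is precisely $\CC^k \setminus Z$. Since $F$ sends $e_i$ to $u_i$, the map $\pi$ restricts to an affine morphism $U_{\widehat\sigma} \to U_\sigma$ between the corresponding affine toric pieces, and these local descriptions glue to recover $\pi$ globally.

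The key local computation is that the coordinate ring $\CC[\widehat\sigma^\vee \cap (\ZZ^k)^\vee]$ of $U_{\widehat\sigma}$ carries a natural grading by $\coker(F^\vee\colon M \to (\ZZ^k)^\vee)$, which coincides with the character grading induced by $G \subset (\CC^*)^k$. Its degree-zero piece is exactly $\CC[\sigma^\vee \cap M]$, the coordinate ring of $U_\sigma$. Hence $\pi|_{U_{\widehat\sigma}}$ realizes $U_\sigma$ as the affine GIT quotient $U_{\widehat\sigma} \sslash G = \spec(\CC[U_{\widehat\sigma}]^G)$, a good and therefore categorical quotient. Gluing these local good quotients over $\Sigma$ yields the almost-geometric claim of the theorem.

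To identify the geometric locus, I would use the standard fact that a good quotient is geometric precisely where all $G$-orbits are closed. If $\sigma$ is simplicial, then $\widehat\sigma$ and $\sigma$ have the same dimension and $F$ restricts to an $\RR$-linear isomorphism of their spans; consequently the $G$-action on $U_{\widehat\sigma}$ reduces to a torus action with finite stabilizers and closed orbits. For non-simplicial $\sigma$, the extra rays of $\widehat\sigma$ satisfy a nontrivial linear relation producing a one-parameter subgroup $\lambda\colon \CC^* \to G$ whose limit $\lim_{t \to 0}\lambda(t)\cdot p$ exists in $U_{\widehat\sigma}$ but lies outside the orbit of $p$, so $\pi|_{U_{\widehat\sigma}}$ fails to be geometric. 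Therefore $U = \bigcup_{\sigma \text{ simplicial}} U_\sigma$, which is the largest simplicial toric subvariety of $X_\Sigma$.

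The codimension bound follows from the elementary observation that every strongly convex rational polyhedral cone of dimension at most two is automatically simplicial. Hence $X_\Sigma \setminus U$ is contained in the union of orbit closures $V(\sigma)$ over non-simplicial $\sigma$, all of which satisfy $\dim\sigma \geq 3$; since $\codim V(\sigma) = \dim\sigma$ in a normal toric variety, the complement has codimension at least three. The main obstacle in executing this plan is the closed-orbit analysis on non-simplicial charts: one must explicitly construct degenerating one-parameter subgroups of $G$ and track limits in the affine monoid coordinates of $U_{\widehat\sigma}$, using the linear relations among the $u_i$ for $\rho_i \subset \sigma$, while simultaneously ruling out such degenerations in the simplicial case via the isomorphism of $\RR$-spans described above.
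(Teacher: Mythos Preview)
The paper does not supply its own proof of this theorem; it is simply quoted from Cox's original article \cite{cox1995homogeneous} (see also \cite[\S5.1]{CLS}). Your outline faithfully reproduces Cox's argument: the affine-by-affine description $U_{\widehat\sigma}\to U_\sigma$ via invariant rings, the characterization of the geometric locus as the simplicial subfan by analyzing closed orbits, and the codimension-three bound from the fact that every cone of dimension $\le 2$ is simplicial. There is nothing to compare against, and your sketch is correct.
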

Important for us is that this means that there is a \emph{very large} Zariski open subset $U \subset X_\Sigma$ such that $G$-orbits in $\pi^{-1}(U)$ are in one-to-one correspondence with points of $U$. The fan of $U$ is the subfan of $\Sigma$ consisting of all its simplicial cones.
\begin{example}
A familiar example is given by the construction of the projective space $X_\Sigma = \PP^n$ as a quotient of $\CC^{n+1} \setminus \{0\}$, in which case $\pi: \CC^{n+1} \setminus \{0\} \rightarrow \PP^n$ is given by $(x_1,x_2,\ldots, x_{n+1}) \mapsto(x_1:x_2:\cdots:x_{n+1})$. Since $\PP^n$ is simplicial, this quotient is geometric, i. e. $X_\Sigma = U$. 
\end{example}

To associate the ring $S$ (with its irrelevant ideal $B$) to our toric variety $X_\Sigma$, we equip it with a grading such that the vanishing locus of homogeneous elements in $\CC^k \setminus Z$ is stable under the action of $G$. The grading is by the \alert{divisor class group} $\Cl(X_\Sigma)$ of $X_\Sigma$, which is the group of Weil divisors modulo linear equivalence. For toric varieties, this group is easy to describe explicitly. Let $D_1, \ldots, D_k$ be the torus invariant prime divisors on $X_\Sigma$ corresponding to $\rho_1, \ldots, \rho_k$ respectively. We have the exact sequence
\begin{equation}\label{eq:Clseq}
0 \rightarrow M \overset{F^\top}{\longrightarrow} \bigoplus_{i=1}^k \ZZ \cdot D_i \overset{\P}{\longrightarrow} \Cl(X_\Sigma) \rightarrow 0,
\end{equation}
where the map $F^\top$ sends a character to its divisor (we will think of this map as a lattice map $\ZZ^n \rightarrow \ZZ^k$ given by the transpose of our matrix $F$) and $\P$ takes a torus invariant divisor to its class in $\Cl(X_\Sigma)$ (see \cite[Theorem 4.1.3]{CLS}). This shows that $\Cl(X_\Sigma) \simeq \ZZ^k / \im F^\top$ and every element of $\Cl(X_\Sigma)$ can be written as the class $[D]$ of some torus invariant divisor $D = \sum_{i=1}^k a_i D_i$. For an element $\alpha = [\sum_{i=1}^k a_i D_i] \in \Cl(X_\Sigma)$, we define the vector subspace 
$$ S_\alpha = \bigoplus_{F^\top m + a \geq 0} \CC \cdot x^{F^\top m + a},$$
where the sum ranges over all $m \in M$ satisfying $\pair{u_i,m} + a_i \geq 0$ (here $\pair{\cdot,\cdot}$ denotes the usual pairing between $N \simeq \ZZ^n$ and its dual $M \simeq \ZZ^n$). One can check that this definition is independent of the chosen representative for $\alpha$ and if $f = \sum_{F^\top m + a \geq 0} c_m x^{F^\top m + a} \in S_\alpha$, then for $g \in G \subset (\CC^*)^k$ we have
$$ f(g \cdot x) = \sum_{F^\top m + a \geq 0} c_m (g \cdot x)^{F^\top m + a} = g^a f(x).$$
It follows that $f \in S_\alpha$ has a well defined vanishing locus 
$$ V_{X_\Sigma}(f) = \{ p \in X_\Sigma ~|~ f(x) = 0 \textup{ for some } x \in \pi^{-1}(p) \}$$
and this definition extends trivially to $V_{X_\Sigma}(f_1, \ldots, f_s) = \bigcap_{i = 1}^s V_{X_\Sigma}(f_i)$ for elements $f_i \in S_{\alpha_i}$. An element $f \in S_\alpha$ is called \alert{homogeneous of degree $\alpha$}. The ring $S$, with its grading by $\Cl(X_\Sigma)$ and its irrelevant ideal $B$, is called the \alert{Cox ring} of $X_\Sigma$.
In conclusion, we summarize some terminology related to the Cox construction of $X_\Sigma$ in the table below. 
\[
\begin{matrix}
&\text{Algebra} & & \text{Geometry} \\ \hline
\text{Cox ring} &S = \bigoplus_{\alpha \in \Cl(X_\Sigma)} S_\alpha & \overset{\maxspec(\cdot)}{\longrightarrow} & \CC^k & \text{total coordinate space} \\
\text{irrelevant ideal}&B & \overset{V_{\CC^k}(\cdot)}{\longrightarrow} & Z & \text{base locus} \\
\text{class group} &\Cl(X_\Sigma) & \overset{\Hom_\ZZ(\cdot,\CC^*)}{\longrightarrow} & G & \text{reductive group}
\end{matrix}
\]

\subsection{Homogenization of sparse polynomial systems} \label{subsec:homogenization}
The standard homogenization used for sending polynomials to the multihomogeneous coordinate ring of $\PP^{n_1} \times \cdots \times \PP^{n_s}$ generalizes nicely for the compact toric varieties $X$ we consider in this paper and their Cox ring $S$. Line bundles on $X$ are in one-to-one correspondence with elements of the Picard group $\Pic(X) \subset \Cl(X)$, consisting of Cartier divisors modulo linear equivalence \cite[Chapter 4]{CLS}. Sections of these line bundles are homogeneous polynomials in $S$. 

The homogenization procedure we will use is described in detail in Section 3 of \cite{telen2019numerical}. We briefly recall how this works. Let $\f_1, \ldots, \f_n \in \CC[M] = \CC[t_1^{\pm 1}, \ldots, t_n^{\pm 1}]$ be a given set of Laurent polynomials and let $\A_1, \ldots, \A_n$ be their supports. That is, $\f_i$ can be written as $\f_i = \sum_{m \in \A_i} c_{i,m} t^m$ where $c_{i,m} \neq 0$. For $i = 1, \ldots, n$, let $\Pol_i = \conv(\A_i)$ be the Newton polytope of $\f_i$ (see for instance \cite[Chapter 7, \S 1]{cox2006using}).
The Minkowski sum of these polytopes is denoted by $\Pol = \Pol_1 + \cdots + \Pol_n$ and it is assumed to have dimension $n$. The normal fan $\Sigma_\Pol$ of $\Pol$ gives the toric variety $X = X_{\Sigma_\Pol}$. For each $i$, there is a canonical way of associating a nef torus invariant Cartier divisor $D_{\Pol_i} = \sum_{j=1}^k a_{i,j} D_i$ to $\Pol_i$. The class of this divisor in $\Pic(X)$ is denoted by $\alpha_i =[D_{\Pol_i}] \in \Pic(X)$. The vector space of Laurent polynomials with Newton polytope $\Pol_i$ is the vector space of sections of the vector bundle $\OO_X(\alpha_i)$ on $X$ \cite[Proposition 4.3.3]{CLS}. Moreover, this vector space can be identified with the degree $\alpha_i$ part of the Cox ring $S$ of $X$ \cite[Proposition 5.3.7]{CLS}. In summary, we have 
$$ \bigoplus_{m \in \Pol_i \cap M} \CC \cdot t^m \simeq \Gamma(X,\OO_X(\alpha_i)) \simeq S_{\alpha_i}.$$
The \alert{homogenization to the Cox ring} of $\f_i$ is given by the coefficients $a_{i,j}$ defining $D_{\Pol_i}$. Explicitly, we obtain the homogeneous polynomials $f_i$ from $\f_i$ by 
\begin{equation} \label{eq:homogenization}
\f_i = \sum_{m \in \A_i} c_{i,m} t^m \mapsto f_i = \sum_{m \in \A_i} c_{i,m} x^{F^\top m + a_i},
\end{equation}
where $F$ the facet matrix. Note that since $\A_i \subset \Pol_i \cap M$, we have that $F^\top m + a_i \geq 0$, so that the $f_i$ are indeed polynomials. 

We will think of the $n$-tuple $(f_1,\ldots,f_n)$ as a section of the rank $n$ vector bundle $\OO_X(\alpha_1) \oplus \cdots \oplus \OO_X(\alpha_n)$ on $X$. The zero locus of this section is the vanishing locus $V_X(f_1,\ldots,f_n)$. It contains the points defined by $\f_1=\cdots=\f_n=0$ in $T$, denoted by $V_T(\f_1,\ldots, \f_n)$.
Since $\OO_X(\alpha_1) \oplus \cdots \oplus \OO_X(\alpha_n)$ is a rank $n$ vector bundle on a variety of dimension $n$, the expected dimension of $V_X(f_1,\ldots,f_n)$ is 0. A well-known result by Bernstein, Khovanskii and Kushnirenko tells us how many points to expect. The statement involves the \alert{mixed volume} $\MV(\Pol_1,\ldots,\Pol_n)$ of the polytopes $\Pol_1,\ldots,\Pol_n$. 
\begin{thm}[BKK Theorem] \label{thm:bkk}
Let $f_i \in S_{\alpha_i}$ be as above. The variety $V_X(f_1,\ldots,f_n)$ contains at most $\MV(\Pol_1,\ldots, \Pol_n)$ isolated points on $X$. If $V_X(f_1,\ldots,f_n)$ is finite, then it consists of exactly $\MV(\Pol_1,\ldots, \Pol_n)$ points, counting multiplicities. For generic choices of the coefficients of the $f_i$, the number of roots in $T \subset X$ is exactly equal to $\MV(\Pol_1,\ldots,\Pol_n)$ and all roots have multiplicity one.
\end{thm}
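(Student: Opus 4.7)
The plan is to combine the standard toric intersection-theoretic computation of mixed volumes with a Bertini--Kleiman genericity argument on $X = X_{\Sigma_\Pol}$. The backbone is the classical identity
$$ [D_{\Pol_1}] \cdots [D_{\Pol_n}] \;=\; \MV(\Pol_1, \ldots, \Pol_n) $$
on the compact toric variety $X$, where the left-hand side is the top intersection number of the nef torus-invariant Cartier divisors attached to the Newton polytopes in Section~\ref{subsec:homogenization}; this is standard toric intersection theory, see~\cite[Chapter 13]{CLS}. With this identity in hand, the three assertions of the theorem will follow from (i) an intersection-number calculation, (ii) properness/transversality for generic coefficients, and (iii) upper-semicontinuity of isolated-point counts.

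First I would prove the generic statement. Because each $\alpha_i$ is nef with plenty of global sections, Bertini's theorem applied to the linear system $|\alpha_i|$, together with Kleiman's transversality applied to torus translates in order to avoid the singular locus of $X$, shows that for a generic choice of coefficients of $(f_1,\ldots,f_n)$ the hypersurfaces $V_X(f_i)$ meet properly in a zero-dimensional reduced subscheme whose cardinality equals $\MV(\Pol_1,\ldots,\Pol_n)$. To rule out generic solutions on the toric boundary, I would use a face-system argument: for each cone $\sigma \in \Sigma$ of positive dimension, the restriction of $(f_1,\ldots,f_n)$ to the closed orbit $V(\sigma)$ is, after identifying the intrinsic torus of $V(\sigma)$ with a quotient subtorus of $T$, a Laurent polynomial system whose supports are the faces $\A_i \cap \Pol_i^\sigma$ of the original supports. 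Since $\dim V(\sigma) < n$, a dimension count in coefficient space (or an induction on $n$ using the lower-dimensional mixed-volume count) shows that for generic coefficients this face system has no solutions on $V(\sigma)$. Combining over all proper orbits, all $\MV(\Pol_1,\ldots,\Pol_n)$ generic solutions lie in $T$ and are simple.

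Next, for arbitrary coefficients I would deduce the ``at most'' and ``exactly with multiplicity'' statements from the behaviour of intersection numbers under specialization. If $V_X(f_1,\ldots,f_n)$ is finite, then $(f_1,\ldots,f_n)$ cuts out a zero-dimensional complete intersection, so the total length of the intersection scheme equals $[D_{\Pol_1}]\cdots[D_{\Pol_n}] = \MV(\Pol_1,\ldots,\Pol_n)$, yielding the ``counting multiplicities'' claim; the underlying set-theoretic count is automatically at most this number. If instead the vanishing locus has positive-dimensional components, I would embed the system into a one-parameter family deforming its coefficients to a generic value and invoke upper-semicontinuity of the number of isolated zeros along such families to recover the same upper bound.

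The main obstacle, in my view, is making the face-system genericity statement truly simultaneous across all cones of $\Sigma$. The coefficients determining the face system on $V(\sigma)$ are not independent from those on the other strata—they are overlapping subsets of the original $c_{i,m}$—so one cannot simply perturb face by face. The cleanest remedy I would pursue is induction on the ambient dimension $n$, treating each face system as a lower-dimensional instance of the theorem whose exceptional coefficient locus is a proper Zariski-closed subset of the relevant coefficient space; taking the finite union of these bad loci over all $\sigma \in \Sigma \setminus \{0\}$ yields a single proper Zariski-closed condition whose complement is the desired generic locus.
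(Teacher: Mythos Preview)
The paper does not give its own proof of this theorem: the entire proof reads ``See \cite[\S 5.5]{fulton1993introduction}.'' Your sketch is essentially the classical argument found there (and expanded in \cite[Chapter~13]{CLS}): compute the top intersection number $[D_{\Pol_1}]\cdots[D_{\Pol_n}]$ as the mixed volume, use torus-equivariant Bertini/Kleiman to get transversality and reducedness for generic coefficients, rule out boundary solutions via face systems, and invoke semicontinuity for the non-generic bounds. So your proposal is correct and matches the cited source; there is nothing to compare against in the paper itself.

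One small caution on your Bertini step: the $\alpha_i$ are only nef (basepoint free) on a possibly singular $X$, so the cleanest way to get reduced, torus-contained intersections for generic coefficients is exactly what you gesture at---use Kleiman's transversality for the $T$-action (which has a dense orbit) rather than classical Bertini, or pass to a toric resolution. This is routine but worth stating explicitly if you write the argument out.
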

\begin{proof}
See \cite[\S 5.5]{fulton1993introduction}.
\end{proof}
The number $\MV(\Pol_1,\ldots,\Pol_n)$ is often referred to as the \alert{BKK bound}. We are mainly interested in the case where $(f_1,\ldots,f_n)$ defines points \emph{outside of} $T$, that is, on the boundary $X \setminus T$ of the torus in $X$. 
To illustrate these examples of interest, we include the following example from~ \cite{telen2019numerical}. 
\begin{example} \label{ex:hirzebruch}
Consider the Laurent polynomials $\f_1, \f_2 \in \CC[t_1^{\pm 1}, t_2^{\pm 1}]$ given by 
\begin{align*}
\f_1 &= 1 + t_1 + t_2 + t_1t_2 + t_1^2t_2 + t_1^3t_2, \qquad 
\f_2 = 1 + t_2 + t_1t_2 + t_1^2t_2,
\end{align*}
which is equal to the system in Example \ref{ex:motivating1} up to the constant coefficient of $\f_2$. Although the BKK bound for the system $\calFhat = (\f_1,\f_2) = 0$ equals $\MV(\Pol_1,\Pol_2) =3$, the point $(-1,-1)$ is the unique solution (with multiplicity 1) in $T = (\CC^*)^2$. To explain this discrepancy with respect to the BKK bound, we extend the relations $\f_1 = \f_2 = 0$ to an appropriate toric variety.  
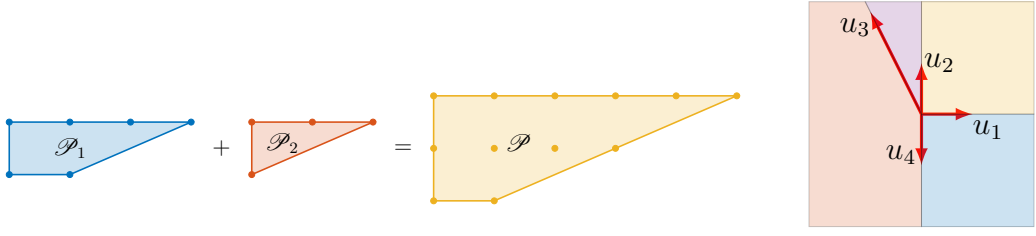
\begin{figure}
\centering
\begin{tikzpicture}[scale = 0.75]
\begin{axis}[%
width=5.500in,
height=1.100in,
scale only axis,
xmin=-0.5,
xmax=12.5,
xtick = \empty,
ymin=-1,
ymax=2,
ytick = \empty,
axis background/.style={fill=white},
axis line style={draw=none} 
]
\addplot [color=mycolor1,solid,thick, fill opacity = 0.2, fill = mycolor1,forget plot]
  table[row sep=crcr]{%
0	0\\
1	0\\
3	1\\
0	1\\
0	0\\
};
\addplot[only marks,mark=*,mark size=1.5pt,mycolor1
        ]  coordinates {
    (0,0) (1,0) (3,1) (2,1) (1,1) (0,1)
};

\node at (axis cs:1.0,0.5) {$\Pol_1$};
\node at (axis cs:3.5,0.5) {$+$};

\addplot [color=mycolor2,solid,thick, fill opacity = 0.2, fill = mycolor2,forget plot]
  table[row sep=crcr]{%
4	0\\
6	1\\
4	1\\
4	0\\
};
\addplot[only marks,mark=*,mark size=1.5pt,mycolor2
        ]  coordinates {
    (4,0) (6,1) (5,1) (4,1)
};
\node at (axis cs:4.5,0.6) {$\Pol_2$};
\node at (axis cs:6.5,0.5) {$=$};

\addplot [color=mycolor3,solid,thick, fill opacity = 0.2, fill = mycolor3,forget plot]
  table[row sep=crcr]{%
7	-0.5\\
8	-0.5\\
12	1.5\\
7	1.5\\
7	-0.5\\
};

\addplot[only marks,mark=*,mark size=1.5pt,mycolor3
        ]  coordinates {
    (7,-0.5) (8,-0.5) (7,0.5) (8, 0.5) (9, 0.5) (10, 0.5) (7, 1.5) (8,1.5) (9,1.5) (10,1.5) (11,1.5) (12,1.5)
};

\node at (axis cs:8.4,0.6) {$\Pol$};
\end{axis}
\end{tikzpicture}%
\quad 
\begin{tikzpicture}[scale=0.68]
\coordinate (O) at (0,0);
\coordinate (u1) at (1,0);
\coordinate (uu1) at (2.2,0);
\coordinate (u12) at (2.2,-2.2);
\coordinate (u2) at (0,-1);
\coordinate (uu2) at (0,-2.2);
\coordinate (u23) at (-2.2,-2.2);
\coordinate (uu23) at (-2.2,2.2);
\coordinate (u3) at (-1,2);
\coordinate (uu3) at (-1.1,2.2);
\coordinate (u4) at (0,1);
\coordinate (uu4) at (0,2.2);
\coordinate (u14) at (2.2,2.2);			
\draw[-latex, very thick, red] (O) -> (u1); 
\draw[-latex, very thick, red] (O) -> (u2);
\draw[-latex, very thick, red] (O) -> (u3);
\draw[-latex, very thick, red] (O) -> (u4); 
\draw[opacity=0.2,fill = mycolor1,] (O)--(uu1)--(u12)--(uu2)--cycle;
\draw[opacity=0.2,fill = mycolor2,] (O)--(uu2)--(u23)--(uu23)--(uu3)--cycle;
\draw[opacity=0.2,fill = mycolor3,] (O)--(uu4)--(u14)--(uu1)--cycle;
\draw[opacity=0.2,fill = mycolor4,] (O)--(uu3)--(uu4)--cycle;
\node at (1.3,-0.3) {$u_1$};
\node at (0.35,1.0) {$u_2$};
\node at (-1.3,1.7) {$u_3$};
\node at (-0.4,-0.8) {$u_4$};
\end{tikzpicture}
\caption{Polytopes and fan of the Hirzebruch surface from Example \ref{ex:hirzebruch}.}
\label{fig:hirzebruch}
\end{figure}
The polytopes and the fan are illustrated in Figure \ref{fig:hirzebruch}. The facet matrix $F$ is 
$$F = [u_1~u_2~u_3~u_4]= \begin{bmatrix}
					1&0&-1&0\\0&1&2&-1
					\end{bmatrix}.$$
The toric variety $X = X_\Sigma$ is the Hirzebruch surface $\HH_2$. The base locus in $\CC^4$ is given by $Z = V_{\CC^4}(x_1,x_3) \cup  V_{\CC^4}(x_2,x_4)$. The divisor $D_{\Pol_2}$ is  $D_{\Pol_2} = D_4$ (i.e.\ $a_{2,1} = a_{2,2} = a_{2,3} = 0, a_{2,4} = 1$, or $a_2 = (0,0,0,1)^\top$). The homogenization of the monomials $t^m$ in $\f_2$ is given by $F^\top m + a_2$: 
$$ F^\top \begin{bmatrix}
0 & 0 & 1 & 2 \\
0 & 1 & 1 & 1
\end{bmatrix} + \begin{bmatrix}
0&0&0&0\\0&0&0&0\\0&0&0&0\\1&1&1&1
\end{bmatrix} = \begin{bmatrix}
0 & 0 & 1 & 2\\ 0& 1&1&1\\0&2&1&0\\1&0&0&0
\end{bmatrix},$$
which gives 
$ f_2 = x_4 + x_2x_3^2 + x_1x_2x_3 + x_1^2x_3 \in S_{[D_4]}$.
Analogously we obtain
$ f_1 = x_3x_4 + x_1x_4 + x_2x_3^3 + x_1x_2x_3^2 + x_1^2x_2x_3 + x_1^3x_2 \in S_{[D_3 + D_4]}$.
The vanishing locus $V_X(f_1,f_2)$ on $X$ consists of three points, with Cox coordinates
$$z_1 = (-1,-1,1,1), ~ z_2 = (0,-1,1,1), ~ z_3 = (1,-1,0,1).$$
Hence, the relations $\f_1 = \f_2 = 0$ define three isolated points on $X$, which is the expected number. Note that $\pi(z_1)$ is the toric solution $(-1,-1)$ ($\pi$ denotes the quotient $\pi: \CC^4 \setminus Z \rightarrow X$) and the other solutions are on the boundary of the torus: $\pi(z_2) \in D_1, \pi(z_3) \in D_3$.
\end{example} 
 
\section{$G$-orbits in the Cox construction} \label{sec:orbitdeg}
Let $X = X_\Sigma$ be a compact toric variety corresponding to a complete fan $\Sigma$ and let $U \subset X$ be as in Theorem \ref{thm:cox}. As mentioned in the introduction, our homotopy algorithm will track a set of Cox coordinates for a point $p \in U \subset X$ by slicing the $G$-orbit $\pi^{-1}(p)$ with a linear space of complementary dimension. In order to understand what this dimension is and how many representatives 
there are in such a linear space,
this section is devoted to an explicit description of the dimension and degree of the projective closure of $G$-orbits $G \cdot z \subset \CC^k \setminus Z$ in the Cox construction. 

\subsection{Orbit parametrization}
Taking $\Hom_\ZZ(-,\CC^*)$ of the exact sequence \eqref{eq:Clseq}
gives the explicit description for $G$,
$$ G = \ker \Hom_\ZZ(F^\top, \CC^*) = \{g \in (\CC^*)^k ~|~ g^{F_{1,:}} = \cdots = g^{F_{n,:}} = 1 \},$$
 as a subgroup of $(\CC^*)^k$. Our first goal is to parametrize the orbit 
$$ G \cdot z = \{ g \cdot z = (g_1 z_1, \ldots, g_k z_k) \in (\CC^*)^k ~|~ g \in G \}$$
for $z \in \CC^k \setminus Z$.
Note that if $\Cl(X) \simeq \ZZ^{k-n}$ is free, then $G = \Hom_\ZZ(\Cl(X),\CC^*) \simeq (\CC^*)^{k-n}$ is a torus. In general, $G$ is a \alert{quasitorus} of dimension $k-n$, i.e.\ it is isomorphic to the direct sum of a torus and a finite abelian group. 

To find the orbit parametrization of $G \cdot z$, the first step is to compute the Smith normal form of the transpose of the facet matrix $F^\top \in \ZZ^{k \times n}$: 
$$ P F^\top Q = \text{diag}(s_1,\ldots,s_n),$$
where $\text{diag}(s_1,\ldots,s_n)$ is a diagonal matrix of size $k \times n$ with the invariant factors $s_i$ of $F^\top$ on its diagonal. We note that $s_i \ne 0$ since $F$ comes from a complete fan.
The submatrix $P''$ of $P$ containing the last $k - n$ rows of $P$ is a $\ZZ$-basis for the kernel of $F: \ZZ^k \rightarrow N$. We denote the submatrix of $P$ containing its first $n$ rows by $P'$. We have that 
$$ \Cl(X) \simeq \ZZ^k / \im ~ F^\top \simeq \ZZ/s_1 \ZZ \oplus \cdots \oplus \ZZ/s_n \ZZ \oplus \ZZ^{k-n} .$$
Note that, since the matrix $P$ is a representation of the map $\P$ from \eqref{eq:Clseq}, we can write this isomorphism explicitly as
\begin{equation} \label{eq:Pmap}
 [\sum_{i=1}^k a_i D_i ] \mapsto (\underbrace{(P'a)_1 + s_1 \ZZ, \ldots, (P'a)_n + s_n \ZZ}_{\ZZ/s_1 \ZZ \oplus \cdots \oplus \ZZ/s_n \ZZ}, \underbrace{ (P''a)_{1}, \ldots, (P''a)_{k-n}}_{\ZZ^{k-n}})
 \end{equation}
where $(P'a)_i$ is the $i$-th entry of the matrix-vector product $P'a$, and likewise for $P''a$. We also have that 
$$ G = \Hom_\ZZ(\Cl(X),\CC^*) = W_1 \oplus \cdots \oplus W_n \oplus (\CC^*)^{k-n}$$
where $W_i \subset \CC^*$ is the multiplicative group of $s_i$-th roots of unity.  
The inclusion $G \hookrightarrow (\CC^*)^k$ is the dual map $\P^\vee = \Hom_\ZZ(\P,\CC^*)$ of \eqref{eq:Pmap}, given by
%
\begin{equation} \label{eq:torusinclusion}
G \simeq \left( \bigoplus_{i=1}^n W_i \right ) \oplus (\CC^*)^{k-n} \rightarrow (\CC^*)^{k}, \quad (w,\lambda) \mapsto (w^{P_{:,1}'} \lambda^{P_{:,1}''}, \ldots, w^{P_{:,k}'} \lambda^{P_{:,k}''} ),
\end{equation}
where $P_{:,i}'$ denotes the $i$-th column of $P'$ and likewise for $P''$.
We conclude that $G \subset (\CC^*)^k$ is a union of tori isomorphic to
$$ T_{P''} = \{ (\lambda^{P_{:,1}''}, \ldots, \lambda^{P_{:,k}''} ) \in (\CC^*)^k ~|~ \lambda \in (\CC^*)^{k-n} \} \simeq (\CC^*)^{k-n}. $$
Moreover, the following statement follows immediately from this discussion. 
\begin{lem} \label{lem:orbitparam}
For $z \in \CC^k \setminus Z$, the orbit $G \cdot z$ of $z = (z_1, \ldots, z_k)$ is parametrized by the map 
\begin{equation} \label{eq:orbitparam}
\left( \bigoplus_{i=1}^n W_i \right ) \oplus (\CC^*)^{k-n} \rightarrow \CC^k \setminus Z \quad \text{given by} \quad (w,\lambda) \mapsto (w^{P_{:,1}'} \lambda^{P_{:,1}''}z_1, \ldots, w^{P_{:,k}'} \lambda^{P_{:,k}''}z_k ).
\end{equation}
\end{lem}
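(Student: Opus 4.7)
The plan is to deduce the statement as an immediate corollary of the explicit description of the inclusion $G \hookrightarrow (\CC^*)^k$ given in \eqref{eq:torusinclusion}, which was derived in the paragraphs preceding the lemma via the Smith normal form factorization $P F^\top Q = \operatorname{diag}(s_1,\ldots,s_n)$ and the identification $G \simeq \Hom_\ZZ(\Cl(X),\CC^*)$. All the real computational content is already carried out there; the remaining task is to combine \eqref{eq:torusinclusion} with the definition of the $G$-action on $\CC^k$.

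First I would observe that, by construction, the $G$-action on $\CC^k \setminus Z$ is simply the restriction of the standard coordinatewise multiplicative action of $(\CC^*)^k$ on $\CC^k$, namely $g \cdot z = (g_1 z_1,\ldots,g_k z_k)$. Consequently the orbit $G \cdot z$ is by definition the image of the composition
$$\Big(\bigoplus_{i=1}^n W_i\Big) \oplus (\CC^*)^{k-n} \;\xrightarrow{\;\eqref{eq:torusinclusion}\;}\; G \;\hookrightarrow\; (\CC^*)^k \;\xrightarrow{\;\cdot\, z\;}\; \CC^k.$$

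Second, I would read off this composition coordinate by coordinate. The $i$-th component of the middle map is $w^{P'_{:,i}} \lambda^{P''_{:,i}}$, so multiplying by $z_i$ produces the $i$-th component $w^{P'_{:,i}} \lambda^{P''_{:,i}} z_i$, which is exactly the formula displayed in \eqref{eq:orbitparam}.

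The one remaining check is that the parametrization actually lands in $\CC^k \setminus Z$ rather than merely in $\CC^k$. Since $Z = V_{\CC^k}(B)$ is cut out by a monomial ideal, its complement is stable under the coordinatewise action of $(\CC^*)^k$; hence $z \notin Z$ forces $g \cdot z \notin Z$ for every $g \in G$. I do not anticipate any genuine obstacle in this argument: the nontrivial work has already occurred in extracting \eqref{eq:torusinclusion} from the Smith normal form, and the lemma is essentially a restatement of that inclusion in orbit form. The only point requiring a small amount of care is bookkeeping for those invariant factors with $s_i = 1$, where $W_i$ is trivial; but then $w^{P'_{:,i}}$ is identically $1$, so the formula is unaffected.
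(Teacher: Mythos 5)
Your argument is correct and matches the paper exactly: the paper gives no separate proof, stating only that the lemma ``follows immediately from this discussion,'' i.e.\ from the explicit inclusion \eqref{eq:torusinclusion} composed with coordinatewise multiplication by $z$, which is precisely what you carry out. Your additional check that the image stays in $\CC^k \setminus Z$ (because $Z$ is a union of coordinate subspaces, hence torus-stable) is a correct and worthwhile detail that the paper leaves implicit.
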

\begin{remark}
Note that the image of \eqref{eq:torusinclusion} is the orbit $G \cdot z$ of $z = (1,\ldots,1)$. 
\end{remark}
For a subset $\W \subset \CC^k \setminus Z$, we denote its Zariski closure in $\PP^k$ by
$$ \overline{\W} = \overline{ \{ (1: z_1: \cdots : z_k) \in \PP^k ~|~ (z_1,\ldots,z_k) \in \W \} } \subset \PP^k.$$
By applying Lemma~\ref{lem:orbitparam} to $\overline{G \cdot z}$, we immediately realize the following corollary. 
\begin{cor} \label{cor:onlydep}
For $z \in \CC^k \setminus Z$, the dimension and degree of the projective variety $\overline{G \cdot z}$ depend only on which $(\CC^*)^k$-orbit $z$ belongs to. Equivalently, they only depend on the set of indices $\J = \{i ~|~ z_i \ne 0 \}$.
\end{cor}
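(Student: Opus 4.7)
The plan is to show that two points $z, z' \in \CC^k \setminus Z$ lie in the same $(\CC^*)^k$-orbit precisely when they have the same set of nonzero coordinates, and then to use this to produce an explicit projective linear automorphism of $\PP^k$ sending $\overline{G \cdot z}$ to $\overline{G \cdot z'}$. Since projective linear automorphisms preserve dimension and degree, the corollary will follow.

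First I would verify the elementary fact that the $(\CC^*)^k$-orbits of $\CC^k$ under coordinate-wise multiplication are exactly the coordinate strata $\{z \in \CC^k ~|~ z_i \ne 0 \textup{ iff } i \in \J\}$, indexed by subsets $\J \subseteq \{1, \ldots, k\}$. Thus $z$ and $z'$ lie in the same $(\CC^*)^k$-orbit iff they share the same support $\J$, which gives the equivalence stated in the corollary. Moreover, in this case one may choose $t \in (\CC^*)^k$ with $z' = t \cdot z$ (setting $t_i = z'_i/z_i$ when $i \in \J$ and $t_i = 1$ otherwise).

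Next, since $G \subset (\CC^*)^k$ acts by coordinate-wise multiplication and $(\CC^*)^k$ is abelian, the actions of $G$ and of the element $t$ commute, so
\begin{equation*}
G \cdot z' \;=\; G \cdot (t \cdot z) \;=\; t \cdot (G \cdot z).
\end{equation*}
The coordinate-wise scaling $\phi_t \colon \CC^k \to \CC^k$, $(z_1, \ldots, z_k) \mapsto (t_1 z_1, \ldots, t_k z_k)$, extends to the projective linear automorphism $\Phi_t \colon \PP^k \to \PP^k$ given by $(x_0 : x_1 : \cdots : x_k) \mapsto (x_0 : t_1 x_1 : \cdots : t_k x_k)$, which is compatible with the affine embedding $(z_1, \ldots, z_k) \mapsto (1 : z_1 : \cdots : z_k)$ used to take the closure. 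Since $\Phi_t$ is an isomorphism of $\PP^k$ and taking closure commutes with isomorphisms, $\overline{G \cdot z'} = \Phi_t(\overline{G \cdot z})$.

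Finally I would invoke the standard fact that projective linear automorphisms of $\PP^k$ preserve both the dimension and the degree of projective subvarieties (they come from invertible linear maps on $\CC^{k+1}$, hence send hyperplanes to hyperplanes and subvarieties isomorphically to subvarieties). This gives $\dim \overline{G \cdot z} = \dim \overline{G \cdot z'}$ and $\deg \overline{G \cdot z} = \deg \overline{G \cdot z'}$, completing the proof. There is no real obstacle here; the main point is simply that the ambient $(\CC^*)^k$ acts compatibly by scalings which extend to linear automorphisms of $\PP^k$.
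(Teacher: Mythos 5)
Your argument is correct and is essentially the paper's own (the paper deduces the corollary directly from the orbit parametrization of Lemma~\ref{lem:orbitparam}, where the coordinates of $z$ enter only as a diagonal scaling): in both cases the key point is that two points with the same support differ by an element of $(\CC^*)^k$, which commutes with $G$ and extends to a diagonal linear automorphism of $\PP^k$ preserving dimension and degree. You have simply made explicit the details the paper leaves implicit.
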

%

\subsection{Orbit dimension}

By Corollary \ref{cor:onlydep}, the dimension of $\overline{G \cdot z}$ is constant on the dense torus $(\CC^*)^k$ of $\CC^k \setminus Z$. In fact, it is constant on an even larger open subset of $X$ by some results from geometric invariant theory. 
\begin{lem} \label{lem:orbitdim}
For $z \in \pi^{-1}(U) \subset \CC^k \setminus Z$, where $U$ is as in Theorem \ref{thm:cox}, we have $\dim \overline{G \cdot z} = k-n$.  
\end{lem}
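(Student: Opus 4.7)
The plan is to establish the equality by two inequalities, using the parametrization from Lemma \ref{lem:orbitparam} for the upper bound and the geometric quotient property from Theorem \ref{thm:cox} for the lower bound. Since $\overline{G \cdot z}$ is the projective closure of the constructible set $G \cdot z \subset \CC^k$, its dimension equals $\dim(G \cdot z)$, so it suffices to show the latter equals $k-n$.

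For the upper bound, I would simply observe that Lemma \ref{lem:orbitparam} exhibits $G \cdot z$ as the image of a morphism from $\bigl(\bigoplus_{i=1}^n W_i\bigr) \oplus (\CC^*)^{k-n}$, whose source has dimension $k-n$ (since each $W_i$ is finite). Hence $\dim \overline{G \cdot z} \leq k-n$, and this inequality is valid for every $z \in \CC^k \setminus Z$, without any assumption on the orbit of $z$.

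For the lower bound, I would invoke Theorem \ref{thm:cox}, which asserts that $\pi\colon \pi^{-1}(U) \to U$ is a geometric quotient for the $G$-action. By definition of a geometric quotient, the set-theoretic fibers of $\pi$ over points of $U$ are precisely the $G$-orbits; in particular, $\pi^{-1}(\pi(z)) = G \cdot z$ for $z \in \pi^{-1}(U)$. Now $\pi^{-1}(U)$ is a non-empty Zariski open subset of $\CC^k$, hence of dimension $k$, while $U$ is a non-empty Zariski open subset of $X$, hence of dimension $n$. The fiber dimension theorem applied to the dominant morphism $\pi$ then yields that every irreducible component of the fiber $\pi^{-1}(\pi(z))$ has dimension at least $k - n$, so $\dim(G \cdot z) \geq k-n$.

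Combining the two bounds gives the required equality $\dim \overline{G \cdot z} = k - n$. The only subtle point — and the main thing to get right — is the invocation of the geometric quotient: one must restrict to $z \in \pi^{-1}(U)$, because on the complement $X \setminus U$ (whose preimage under $\pi$ has codimension at least $3$) the quotient is only categorical, orbits may fail to be fibers, and the orbit dimension can a priori drop. The restriction to $\pi^{-1}(U)$ in the hypothesis is therefore essential, and no further work on stabilizers is needed beyond the general fiber-dimension argument.
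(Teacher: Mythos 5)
Your proof is correct, but it takes a genuinely different route from the paper's. The paper argues via geometric invariant theory that $\pi^{-1}(U)$ is exactly the set of \emph{stable} points, i.e.\ points whose stabilizer $G_z$ is finite, and then concludes in one line that $\dim G\cdot z = \dim G - \dim G_z = \dim G = k-n$ (using that $G$ is a quasitorus of dimension $k-n$). You instead split the equality into two inequalities: the upper bound $\dim \overline{G\cdot z}\le k-n$ from the parametrization in Lemma \ref{lem:orbitparam} (valid for every $z\in\CC^k\setminus Z$, as you note), and the lower bound from the fact that over $U$ the quotient is geometric, so $\pi^{-1}(\pi(z)) = G\cdot z$ as sets, whereupon the fiber-dimension theorem for the dominant morphism $\pi^{-1}(U)\to U$ between irreducible varieties of dimensions $k$ and $n$ gives $\dim G\cdot z\ge k-n$. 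Your argument trades the GIT input (the characterization of stability via finite stabilizers, which the paper cites from Brion and Mumford) for a more elementary and self-contained combination of the orbit parametrization plus standard fiber-dimension theory; the paper's version is shorter and also makes transparent \emph{why} the restriction to $\pi^{-1}(U)$ matters (positive-dimensional stabilizers can occur outside it, cf.\ Example \ref{ex:pyramid}), whereas your version makes the same point through the failure of fibers to coincide with orbits outside $U$. Both are sound.
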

\begin{proof}
The set $\pi^{-1}(U) \subset \CC^k \setminus Z$ is the set of stable points for the action of $G$ on $\CC^k \setminus Z$ (these are the points $z$ whose stabilizer $G_z$ consists of finitely many points, see e.g.\ \cite[Proposition 1.26]{brion2010introduction} or \cite[Chapter 1, \S 4]{mumford1994geometric}). Therefore $\dim G \cdot z = \dim \overline{G \cdot z} = \dim G = k-n$.
\end{proof}
For points $z$ outside of $\pi^{-1}(U)$, the orbit $G \cdot z$ might not be closed in $\CC^k \setminus Z$, and a general formula for the dimension is given by $\dim G \cdot z = \dim G - \dim G_z$, where $G_z$ is the stabilizer of $G$ at $z$. The following example illustrates what may happen. 
\begin{example} \label{ex:pyramid}
Consider the toric threefold $X = X_\Sigma$ corresponding to a pyramid in $\RR^3$ whose normal fan $\Sigma$ has rays generated by the columns of
$$ F = \begin{bmatrix}
0 & 1 & 0 & -1 & 0 \\ 0 & 0 & 1 & 0 & -1 \\ 1 & -1 & -1 & -1 & -1 
\end{bmatrix} = \begin{bmatrix}
u_1 & u_2 & u_3 & u_4 & u_5
\end{bmatrix}.$$
The polytope and fan are illustrated in Figure \ref{fig:pyramid}. 
\begin{figure}[h!]
\centering
\tdplotsetmaincoords{70}{65}
\begin{tikzpicture}[baseline=(E.base),scale=1.5,tdplot_main_coords,scale = 0.8]
\coordinate (A) at (1.5,1.5,0);
\coordinate (B) at (1.5,-1.5,0);
\coordinate (C) at (-1.5,-1.5,0);
\coordinate (D) at (-1.5,1.5,0);
\coordinate (O) at (0,0,1.5);
\coordinate (E) at (0,0,1);

\draw[fill opacity=0.5,fill = mycolor2,] (A)--(B)--(C)--(D)--cycle;			
\draw[fill opacity=0.5,fill = mycolor2,] (O)--(C)--(D)--cycle;
\draw[fill opacity=0.5,fill = mycolor2,] (O)--(D)--(A)--cycle;
\draw[fill opacity=0.5,fill = mycolor2,] (O)--(A)--(B)--cycle;
\draw[fill opacity=0.5,fill = mycolor2,] (O)--(B)--(C)--cycle;
\end{tikzpicture}
\qquad \qquad
\begin{tikzpicture}[baseline=(O.base),scale=1.5,tdplot_main_coords, scale = 0.8]
\coordinate (O) at (0,0,0);
\coordinate (A) at (0,0,1.5);
\coordinate (B) at (1.5,0,-1.5);
\coordinate (C) at (0,1.5,-1.5);
\coordinate (D) at (-1.5,0,-1.5);
\coordinate (E) at (0,-1.5,-1.5);
																		
\draw[opacity=0.2,fill = mycolor1,] (O)--(A)--(C)--cycle;
\draw[opacity=0.2,fill = mycolor1,] (O)--(A)--(B)--cycle;
\draw[opacity=0.2,fill = mycolor1,] (O)--(A)--(D)--cycle;
\draw[opacity=0.2,fill = mycolor1,] (O)--(A)--(E)--cycle;
\draw[opacity=0.2,fill = mycolor1,] (O)--(B)--(C)--cycle;
\draw[opacity=0.2,fill = mycolor1,] (O)--(C)--(D)--cycle;
\draw[opacity=0.2,fill = mycolor1,] (O)--(D)--(E)--cycle;
\draw[opacity=0.2,fill = mycolor1,] (O)--(E)--(B)--cycle;
							
\draw[-latex] (O) -- (A) node[anchor=west] {$\rho_1$};
\draw[-latex] (O) -- (B) node[anchor=south] {$\rho_2$};
\draw[-latex] (O) -- (C) node[anchor=south] {$\rho_3$};
\draw[-latex] (O) -- (D) node[anchor=south] {$\rho_4$};
\draw[-latex] (O) -- (E) node[anchor=south] {$\rho_5$};
\end{tikzpicture} \qquad \qquad
\caption{Polytope and normal fan from Example \ref{ex:pyramid}.}
\label{fig:pyramid}
\end{figure}
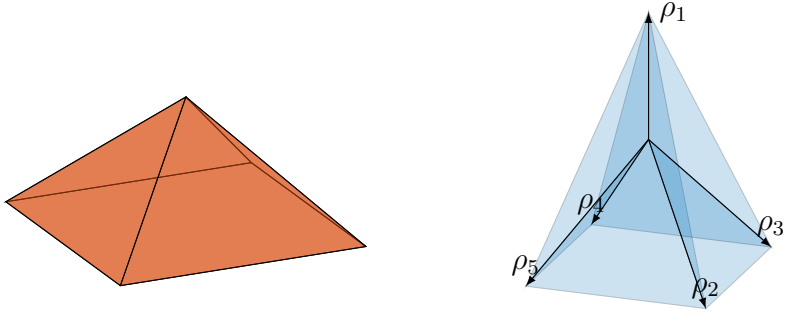
The irrelevant ideal is given by $B = \ideal{x_1,x_2x_3,x_2x_5,x_3x_4,x_4x_5}$. The corresponding two-dimensional base locus is $Z = V_{\CC^k}(x_1,x_2,x_4) \cup V_{\CC^k}(x_1,x_3,x_5)$. A Smith normal form computation yields 

\begin{figure}[H]
\centering
\begin{tikzpicture}[
    every left delimiter/.style={xshift=.4em},
    every right delimiter/.style={xshift=-.4em},
  ]
 \matrix (vec) [matrix of math nodes, left delimiter = {[}, right delimiter = {]}, inner sep=0pt, 
      nodes={inner sep=.3333em} ] {
0 & 1 & 0 & 0 & 0 \\ 0 & 0 & 1 & 0 & 0 \\ 1 & 0 & 0 & 0 & 0 \\ 2 & 1 & 0 & 1 & 0 \\ 2 & 0 & 1 & 0 & 1 \\ };
\node (a) at (vec-1-1.north) [left=10pt]{};
\node (b) at (vec-3-1.south) [left=10pt]{};
\node (c) at (vec-4-1.north) [left=10pt]{};
\node (d) at (vec-5-1.south) [left=10pt]{};
\node (F) at (vec-3-5) [right=10pt]{$F^\top$};
\draw [decorate, decoration={brace, amplitude=10pt}] (b) -- (a) node[midway, left=10pt] {\footnotesize $P'$};
\draw [decorate, decoration={brace, amplitude=6pt}] (d) -- (c) node[midway, left=10pt] {\footnotesize $P''$};
\matrix (m) at (vec-3-5) [right = 35pt, matrix of math nodes, left delimiter = {[}, right delimiter = {]}, inner sep=0pt, 
      nodes={inner sep=.3333em},] {
1 & 0 & 1 \\ 0 & 1 & 1 \\ 0 & 0 & 1 \\ }; 
\node at (vec-3-5) [right=80pt]{$=$};
\matrix (di) at (vec-3-5) [right = 100pt, matrix of math nodes, left delimiter = {[}, right delimiter = {]}, inner sep=0pt, 
      nodes={inner sep=.3333em},] {
1 & 0 & 0 \\ 0 & 1 & 0 \\ 0 & 0 & 1 \\ 0 & 0 & 0 \\ 0 & 0 & 0\\ };
\end{tikzpicture}
\end{figure}

\noindent such that by Lemma \ref{lem:orbitparam} the orbit of $(z_1,z_2,z_3,z_4,z_5)$ is parametrized by 
\begin{equation} \label{eq:parampyr}
(\lambda_1^2 \lambda_2^2 z_1, \lambda_1 z_2, \lambda_2 z_3, \lambda_1 z_4, \lambda_2 z_5), \quad (\lambda_1,\lambda_2) \in (\CC^*)^2.
\end{equation}
The subset $U \subset X$ for which $\pi^{-1}(U) \rightarrow U$ is geometric is the complement in $X$ of the torus invariant point $p \in X$ corresponding to $\sigma = \cone(\rho_2,\rho_3,\rho_4,\rho_5)$. It is clear that $z_{\{1\}} = (1,0,0,0,0) \in  \pi^{-1}(p)$. We see from Equation \eqref{eq:parampyr} that the orbit $G \cdot z_{\{1\}}$ has dimension 1. This is the unique closed $G$-orbit in $\pi^{-1}(p)$, and one can check that the stabilizer $G_{z_{\{1\}}}$ is one-dimensional. Since $\sigma$ is the smallest cone of $\Sigma$ containing $\rho_3$ and $\rho_5$, we see that the $(\CC^*)^5$-orbit 
$$(\CC^*)^{{\{1,2,4 \}}} = \{ (z_1,z_2,0,z_4,0) ~|~ (z_1,z_2,z_4) \in (\CC^*)^3 \} \subset \CC^5 \setminus Z$$
is contained in $\pi^{-1}(p)$. The same holds for $\rho_2$ and $\rho_4$. The fiber $\pi^{-1}(p)$ has dimension 3. From Equation~\eqref{eq:parampyr} it is clear that the orbit $G\cdot z$ has dimension 2 for $z \in (\CC^*)^{{\{1,2,4 \}}}$. Moreover, these orbits are not closed in $\CC^k \setminus Z$, as they contain $G \cdot {z_{\{1\}}}$ in their closure.
\end{example}

\subsection{Orbit degree}
For a finite set $\A = \{\alpha_0, \ldots, \alpha_s\} \subset \ZZ^\ell$ we use the notation $X_\A \subset \PP^{s}$ for the projective toric variety obtained as the closure of the image of $(\CC^*)^\ell \rightarrow \PP^{s}$, where $(t_1,\ldots,t_\ell) \mapsto (t^{\alpha_1}: \cdots : t^{\alpha_s})$. 
We associate to $\A$ the matrix $A$ of $\ell$-tuples in $\A$, i.e. $A = [\alpha_0 ~ \cdots ~ \alpha_s]: \ZZ^{s+1} \rightarrow \ZZ^\ell$.
One may compute the degree of the projective variety $X_\A$ using Kushnirenko's Theorem.
\begin{thm}[Kushnirenko's theorem \cite{kouchnirenko1976polyedres}] \label{thm:kush}
Let $X_\A \subset \PP^s$  be the projective toric variety defined by 
$$ A = \begin{bmatrix}
1 & \cdots & 1 \\ m_0 & \cdots & m_s 
\end{bmatrix} : \ZZ^{s+1} \rightarrow \ZZ^\ell.$$
Let $\Delta = \conv(m_0, \ldots, m_s) \subset \RR^{\ell-1}$ be the polytope obtained by taking the convex hull of the lattice points $m_0, \ldots, m_s \in \ZZ^{\ell -1}$ and suppose that $\Delta$ has dimension $\ell -1$. Then,
$$ \deg X_\A = \frac{(\ell-1)!}{q} \Vol(\Delta),$$
where $\Vol(\cdot)$ denotes the Euclidean volume and $q$ is the lattice index of $\im ([m_0 ~ \cdots ~ m_s]: \ZZ^{s+1} \rightarrow \ZZ^{\ell -1})$ in $\ZZ^{\ell-1}$. That is, for almost all choices of coefficients $c_{ij} \in \CC$, the system of equations 
$$ \sum_{j=0}^s c_{ij} \lambda^{m_j} = 0, \qquad i = \ell-1$$ 
has exactly $(\ell-1)! \Vol(\Delta)$ solutions $\lambda \in (\CC^*)^{\ell-1}$.
\end{thm}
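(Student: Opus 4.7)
The plan is to compute $\deg X_{\A}$ by intersecting with a generic linear subspace of complementary dimension and then transport that count through the monomial parametrization to a Bernstein-type count on the torus.

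First I would recall that, by definition, $\deg X_{\A}$ equals the number of points, counted with multiplicity, in the intersection of $X_{\A}$ with $\ell-1$ generic hyperplanes in $\PP^{s}$ (since $\dim X_{\A} = \ell -1$ under the hypothesis that $\Delta$ has dimension $\ell-1$, as the rows of $A$ then have rank $\ell$). A generic hyperplane in $\PP^{s}$ restricted to $X_{\A}$ pulls back along the parametrization $\varphi : (\CC^{*})^{\ell-1} \to X_{\A}$, $\lambda \mapsto (\lambda^{m_{0}} : \cdots : \lambda^{m_{s}})$, to a hypersurface of the form $\{ \sum_{j=0}^{s} c_{ij}\lambda^{m_{j}} = 0 \}$. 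So the intersection with $\ell-1$ generic hyperplanes corresponds to the system in the statement, pulled back to $(\CC^{*})^{\ell-1}$.

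Next I would relate the two counts through the degree of the parametrization map $\varphi$. The map $\varphi$ factors through the quotient of $(\CC^{*})^{\ell-1}$ by a finite subgroup whose order is precisely the lattice index
\[
q = [\ZZ^{\ell-1} : \im([m_{0} \ \cdots \ m_{s}])],
\]
since the character lattice of the closure of the image of $\varphi$ is the sublattice generated by the differences $m_{j}-m_{0}$ (equivalently, by the image of $[m_{0} \ \cdots \ m_{s}]$ once one mods out the common row of ones in $A$). Hence $\varphi$ is a $q$-to-one map onto its image, and for generic hyperplanes the intersection points lift to exactly $q$ torus solutions each. This gives
\[
\#\{\lambda \in (\CC^{*})^{\ell-1} : \textstyle\sum_{j}c_{ij}\lambda^{m_{j}} = 0,\ i=1,\ldots,\ell-1\} \;=\; q \cdot \deg X_{\A}.
\]

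Finally I would apply the BKK Theorem (Theorem~\ref{thm:bkk}) on the torus $(\CC^{*})^{\ell-1}$ to count this number: each of the $\ell-1$ generic Laurent polynomials has Newton polytope equal to $\Delta$, so for generic coefficients the number of solutions equals the mixed volume of $\ell-1$ copies of $\Delta$, namely $(\ell-1)!\,\Vol(\Delta)$. Equating $q \cdot \deg X_{\A} = (\ell-1)!\,\Vol(\Delta)$ and solving for $\deg X_{\A}$ yields the stated formula. The main obstacle is verifying the ``$q$-to-one'' claim for the parametrization cleanly; I would handle it by identifying the character lattice of $\overline{\varphi((\CC^{*})^{\ell-1})}$ with the image of $[m_{0}\ \cdots \ m_{s}]$ (this uses that the first row of $A$ is all ones, so only differences of the $m_{j}$ matter), which makes $\varphi$ the composition of an isogeny of degree $q$ with a birational parametrization of $X_{\A}$.
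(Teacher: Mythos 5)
The paper offers no proof of this theorem: it is quoted verbatim as a classical result with a citation to Kushnirenko, so there is nothing internal to compare against. Your plan is the standard proof and is sound in outline: identify $\deg X_\A$ with the number of points of $X_\A$ in $\ell-1$ generic hyperplanes (generic sections miss the boundary $X_\A \setminus T_{X_\A}$, which has strictly smaller dimension, and are transverse by Bertini), pull the hyperplanes back along $\varphi(\lambda) = (\lambda^{m_0}:\cdots:\lambda^{m_s})$, count the torus solutions by the BKK bound $(\ell-1)!\,\Vol(\Delta)$, and divide by the degree of $\varphi$ onto its image.

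The one step you should tighten is the identification of that degree with $q$. The kernel of $\varphi$ restricted to the dense torus is $\Hom_\ZZ\bigl(\ZZ^{\ell-1}/L,\CC^*\bigr)$, where $L = \ZZ\langle m_1-m_0,\ldots,m_s-m_0\rangle$ is the \emph{difference} lattice (equivalently, the intersection of $\im A$ with the hyperplane where the first coordinate vanishes), so $\varphi$ is $[\ZZ^{\ell-1}:L]$-to-one. Your parenthetical asserts this equals $[\ZZ^{\ell-1}:\im([m_0~\cdots~m_s])]$, but $\im([m_0~\cdots~m_s]) = \ZZ\langle m_0,\ldots,m_s\rangle$ contains $L$ and can be strictly larger: for $\ell-1=1$, $m_0=1$, $m_1=3$, the map $\lambda\mapsto(\lambda:\lambda^3)=(1:\lambda^2)$ is $2$-to-one onto $\PP^1$ of degree $1$, with $\Vol(\Delta)=2$; here $[\ZZ:L]=2$ but $[\ZZ:\ZZ\langle 1,3\rangle]=1$. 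So the clean statement of the theorem (and what your argument actually proves) uses the index of the difference lattice. The two indices agree whenever $m_0\in L$, in particular whenever some $m_j=0$; this holds in every application in this paper, since the matrices $A_\J$ of Section~3 always contain the column $(1,0,\ldots,0)^\top$. With that correction to the lattice, your argument goes through.
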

With the notation of Subsection \ref{subsec:coxconstruction}, let $\J \subset \{1, \ldots, k\}$ be a subset of indices such that there is a cone $\sigma \in \Sigma$ containing each $\rho_i \in \Sigma(1), i \notin \J$. Equivalently, $\J$ is such that $\cone(e_i, i \notin \J) \in \Sigma'$, where $e_i$ is the $i$-th standard basis vector of $\RR^k$. The $(\CC^*)^k$-orbit $\{ z \in \CC^k \setminus Z ~|~ z_i \ne 0, \text{ for all } i \in \J \text{ and } z_i = 0 \text{ for all } i \notin \J \}$ is denoted by $(\CC^*)^{\J} \simeq (\CC^*)^{|\J|}$. We define the map $\P^\J$ as the restriction of $\P$ from \eqref{eq:Clseq} to $\bigoplus_{i \in \J} \ZZ \cdot D_i$ and consider the exact sequence
\begin{equation} \label{eq:seqorbitdeg}
0 \longrightarrow \ker \P^\J \longrightarrow \bigoplus_{i \in \J} \ZZ \cdot D_i \overset{\P^\J}{\longrightarrow} \Cl(X) \longrightarrow \coker \P^\J \longrightarrow 0.
\end{equation}
Taking duals shows that $(\P^\J)^\vee = \Hom_\ZZ(\P^\J,\CC^*) : G \rightarrow (\CC^*)^{{\J}}$ is given in coordinates by 
\begin{equation} \label{eq:orbitmap}
G \simeq \left( \bigoplus_{i=1}^n W_i \right ) \oplus (\CC^*)^{k-n} \rightarrow (\CC^*)^{{\J}} \simeq (\CC^*)^{|\J|}, \quad (w,\lambda) \mapsto (w^{P'_{:,i}}\lambda^{P''_{:,i}})_{i \in \J}.
\end{equation}
By Lemma \ref{lem:orbitparam}, the image of $(\P^\J)^\vee$ is the orbit $G \cdot z_\J$ of $z_\J = \sum_{i \in \J} e_i$, where $e_i$ is the $i$-th standard basis vector of $\CC^k$. The closure $\overline{G \cdot z_\J}$ is a union of a number of copies of 
the projective toric variety $X_{\A_\J}$ where $\A_\J$ is given by the columns of 
$$A_\J = \begin{bmatrix}
1 & 1 & \cdots & 1 \\
0 &  & P''_{:,\J}
\end{bmatrix},$$
and $P''_{:,\J}$ is the submatrix of $P''$ containing the columns indexed by $\J$.
\begin{example}
For $\J = \{1, \dots, k\}$, we have that $X_{\A_\J} = \overline{T_{P''}}$. Indeed,
$$X_\A = X_{\A_\J} := \overline{\left  \{ ( 1: \lambda^{P_{:,1}''}: \ldots: \lambda^{P_{:,k}''}) ~|~ \lambda \in (\CC^*)^{k-n} \right \}  } \subset \PP^k,$$
where $\A = \A_\J = \{\alpha_0, \ldots, \alpha_k \} \subset \ZZ^{k-n}$ is given by the columns of 
$$A = A_\J = \begin{bmatrix} 1 & 1 & \hdots & 1 \\ 0 &  & P'' \end{bmatrix}.$$ 
The dimension of $\overline{T_{P''}} = X_\A$ is $k-n$, by the fact that $P''$ has rank $k-n$ and  \cite[Proposition 2.1.2]{CLS}. The Gale transform of the lattice points in $\A$ consists of $\{u_1, \ldots, u_k, -\sum_{i=1}^k u_i \}$, where $u_i$ are the primitive ray generators of the rays in $\Sigma(1)$. This follows from the observation that 
$$ A \begin{bmatrix}
- \sum_{i=1}^k u_i^\top \\ F^\top 
\end{bmatrix} = 0. \qedhere$$
\end{example}
By Theorem \ref{thm:kush}, under the assumption that $P''_{:,\J}$ has rank $k-n$, we have that the degree of $X_{\A_\J}$ is $(k-n)!/q_\J \Vol(\Delta^\J)$, where $\Delta^\J = \conv( \{P''_{:,i}, i \in \J \} ) \subset \RR^{k-n}$ and $q_\J$ is the \alert{lattice index} of $\im P''_{:,\J}$ in $\ZZ^{k-n}$. We call the polytope $\Delta^\J$ the \alert{orbit polytope} corresponding to $\J$. 
\begin{prop} \label{prop:orbitdeg}
Let $\J \subset \{1, \ldots, k\}$ be a subset of indices such that $z_\J = \sum_{i \in \J} e_i \in \pi^{-1}(U)$, with $U$ as in Theorem \ref{thm:cox}. For all $z$ in the $(\CC^*)^k$-orbit $(\CC^*)^{\J} \subset \CC^k \setminus Z$, we have 
$$ \deg \overline{G \cdot z} = \frac{s_\J}{q_\J} (k-n)! \Vol(\Delta^\J),$$
where $q_\J, \Delta^\J$ are the lattice index and orbit polytope as above,  $s_\J$ is the product of the invariant factors of $\ker \P^\J$, and $\Vol(\cdot)$ denotes the Euclidean volume. In particular, $\overline{G \cdot z}$ is a union of $s_\J$ irreducible projective varieties of degree $\deg(\overline{G \cdot z})/s_\J$, each of which is equal to the projective toric variety $X_{\A_\J} \subset \CC^{\J} \subset \CC^k \setminus Z$ up to an invertible diagonal scaling.
\end{prop}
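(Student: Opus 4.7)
The plan is to parametrize $G \cdot z_\J$ explicitly using \eqref{eq:orbitmap}; the case of general $z$ in the $(\CC^*)^k$-orbit $(\CC^*)^\J$ then follows from Corollary \ref{cor:onlydep} together with the observation that coordinatewise scaling in $\CC^k$ by a fixed element of $(\CC^*)^\J$ extends to an invertible diagonal automorphism of $\PP^k$, carrying $\overline{G \cdot z_\J}$ to $\overline{G \cdot z}$ without altering the degree or the irreducible-component structure. In particular, each irreducible component of $\overline{G \cdot z}$ will be such a diagonal scaling of the corresponding component of $\overline{G \cdot z_\J}$, which establishes the last sentence of the proposition.

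I next apply the exact functor $\Hom_\ZZ(-, \CC^*)$ to the sequence \eqref{eq:seqorbitdeg}, obtaining
\begin{equation*}
0 \to \Hom_\ZZ(\coker \P^\J, \CC^*) \to G \to (\CC^*)^\J \to \Hom_\ZZ(\ker \P^\J, \CC^*) \to 0,
\end{equation*}
which is exact because $\CC^*$ is divisible. The image of the middle arrow is exactly $G \cdot z_\J$, so $G \cdot z_\J \simeq \Hom_\ZZ(\im \P^\J, \CC^*)$ canonically. Lemma \ref{lem:orbitdim} forces $\dim G \cdot z_\J = k - n$, hence $\im \P^\J$ has rank $k - n$ and $\ker \P^\J$ has rank $|\J| - (k - n)$. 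Putting the inclusion $\ker \P^\J \hookrightarrow \bigoplus_{i \in \J} \ZZ \cdot D_i$ into Smith normal form with invariant factors $s_1, \ldots, s_r$ (so that $s_\J = s_1 \cdots s_r$) yields $\im \P^\J \simeq \bigoplus_{i=1}^r \ZZ/s_i\ZZ \oplus \ZZ^{k-n}$, and dualizing gives
\begin{equation*}
G \cdot z_\J \simeq F \oplus T_0, \qquad |F| = s_\J, \quad T_0 \simeq (\CC^*)^{k-n}.
\end{equation*}
Inside $(\CC^*)^\J$, this realizes $G \cdot z_\J$ as a disjoint union of $s_\J$ cosets of the connected subtorus $T_0$, indexed by elements of a finite subgroup $F \subset (\CC^*)^\J$.

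Setting $w = 1$ in \eqref{eq:orbitparam} identifies $T_0$ with the image of the monomial map $\lambda \mapsto (\lambda^{P''_{:,i}})_{i \in \J}$. Since $P''_{:,\J}$ has rank $k - n$, the orbit polytope $\Delta^\J$ is full-dimensional, and the projective closure of $T_0$ inside the coordinate subspace $\{x_i = 0 : i \notin \J\} \cong \PP^{|\J|}$ of $\PP^k$ is, by construction, the projective toric variety $X_{\A_\J}$. Theorem \ref{thm:kush} then gives $\deg X_{\A_\J} = (k-n)! \Vol(\Delta^\J) / q_\J$. Each of the remaining components of $\overline{G \cdot z_\J}$ is a translate of $X_{\A_\J}$ by an element of $F \subset (\CC^*)^\J$, acting on $\PP^k$ as an invertible diagonal scaling and hence of the same degree. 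Summing over the $s_\J$ irreducible components produces the claimed degree formula.

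The main technical point will be verifying that the abstract decomposition $G \cdot z_\J \simeq F \oplus T_0$ is compatible with the ambient embedding into $(\CC^*)^\J$, so that $F$ genuinely acts by coordinatewise scaling on $\PP^k$ and the $s_\J$ cosets $f \cdot T_0$ remain distinct as subvarieties. Both compatibilities are most cleanly extracted by realizing $F$ directly as the image under $(\P^\J)^\vee$ of the torsion part of $G$, using the naturality of Pontryagin duality applied to the torsion/free decomposition of $\im \P^\J$; distinctness then follows from $F \cap T_0 = \{1\}$ in that decomposition.
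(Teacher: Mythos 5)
Your proposal is correct and follows essentially the same route as the paper's proof: reduce to $z_\J$ via Corollary \ref{cor:onlydep}, dualize the exact sequence \eqref{eq:seqorbitdeg} to identify $G \cdot z_\J$ with $\Hom_\ZZ(\im \P^\J, \CC^*)$, a disjoint union of $s_\J$ diagonal translates of the torus $T_{P''_{:,\J}}$, and apply Theorem \ref{thm:kush} to each component $X_{\A_\J}$. You merely spell out details the paper leaves implicit (exactness of the dualized sequence, the Smith normal form computation of $s_\J$, and the distinctness of the $s_\J$ cosets), all of which check out.
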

\begin{proof}
By Corollary \ref{cor:onlydep}, it suffices to show the proposition for $z = z_\J$. From \eqref{eq:orbitmap}, we see that $\overline{G \cdot z_\J}$ is a union of varieties obtained by scaling $X_{\A_\J}$. The dense torus of $X_{\A_\J}$, denoted by $T_{P''_{:,\J}}$, is the projection of $T_{P''}$ onto $(\CC^*)^\J$. The toric variety $X_{\A_\J}$ has dimension $k-n$ by Lemma \ref{lem:orbitdim} and degree  $(k-n)!/q_\J \Vol(\Delta^\J)$ by Theorem \ref{thm:kush}. By taking the dual of \eqref{eq:seqorbitdeg}, we see that $G \cdot z_\J$ is isomorphic to the quasitorus
$$ \Hom_\ZZ \left ( ~ \frac{\bigoplus_{i \in \J} \ZZ \cdot D_i}{\ker \P^\J} ~, \CC^* \right ), $$ 
which is a direct sum of a finite group of order $s_\J$ and the torus $T_{P''_{:,\J}}$.
\end{proof}
\begin{remark} \label{rem:genericorbit}
Note that when $\J = \{1, \dots, k\}$, $\ker \P^{\J} = \ker \P = \im F^\top$, such that the invariant factors of $\ker \P$ are $s_1, \ldots, s_n$ and $s = s_{\J} = \prod_{i=1}^n s_i$. Moreover, the lattice map $P'' = P''_{:,\J}$ is surjective, such that $q_{\J} = 1$.
\end{remark}
\begin{cor} \label{cor:freeclass}
If $\Cl(X) \simeq \ZZ^{k-n}$ is free, then for $\J$ as in \ref{prop:orbitdeg}, the degree of $\overline{G \cdot z} \subset \PP^k$ is $\deg \overline{G\cdot z} = \Vol(\Delta^{\J})$ for all $z$ in the $(\CC^*)^k$-orbit $(\CC^*)^{\J}$. Moreover, $\overline{G \cdot z} \subset \PP^k$ is a(n irreducible) toric variety. 
\end{cor}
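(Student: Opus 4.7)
The plan is to read this off directly from Proposition~\ref{prop:orbitdeg} by specializing to the torsion-free case. The only substantive step is showing that the integer $s_\J$ appearing in the proposition equals $1$ under the hypothesis; everything else then reduces to bookkeeping, and the irreducibility claim falls out automatically.

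First I would verify that $s_\J = 1$. By definition, $s_\J$ is the product of the (torsion) invariant factors of $\ker \P^\J$. Since $\ker \P^\J$ sits inside the free abelian group $\bigoplus_{i \in \J} \ZZ \cdot D_i$, it is itself a free abelian group, hence torsion-free, so its torsion invariant factors form an empty product and $s_\J = 1$. (The hypothesis that $\Cl(X)$ is free is not strictly needed for this argument, but it is precisely what guarantees that the corollary is a genuine specialization of Proposition~\ref{prop:orbitdeg} with $s_\J = 1$.)

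Plugging $s_\J = 1$ into Proposition~\ref{prop:orbitdeg}, one concludes that $\overline{G \cdot z}$ consists of exactly one irreducible component, equal up to an invertible diagonal scaling to the projective toric variety $X_{\A_\J}$; this yields the ``moreover'' assertion. At the same time, the degree formula in the proposition collapses to $\deg \overline{G \cdot z} = \tfrac{(k-n)!}{q_\J} \Vol(\Delta^\J)$ with Euclidean volume on the right, which is precisely the expression that Theorem~\ref{thm:kush} gives for $\deg X_{\A_\J}$; this matches $\Vol(\Delta^\J)$ as it appears in the corollary (interpreted as the normalized lattice volume of $\Delta^\J$ relative to $\im P''_{:,\J}$). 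I do not anticipate any real obstacle here; the only item worth double-checking carefully is the consistency of the three volume conventions across Theorem~\ref{thm:kush}, Proposition~\ref{prop:orbitdeg}, and the corollary itself.
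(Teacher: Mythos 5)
Your overall strategy --- specialize Proposition~\ref{prop:orbitdeg} by showing $s_\J = 1$ --- is exactly what the paper intends (the corollary is stated without proof as an immediate consequence of the proposition), and your attention to the volume normalizations is warranted. However, your justification of $s_\J = 1$ contains a genuine error. You compute the invariant factors of $\ker\P^\J$ as an \emph{abstract} abelian group: since it is free, you conclude the product is empty and $s_\J = 1$ unconditionally. That is not what $s_\J$ is. In the paper's usage (see Remark~\ref{rem:genericorbit} and the proof of Proposition~\ref{prop:orbitdeg}), the ``invariant factors of $\ker\P^\J$'' are the elementary divisors of the inclusion $\ker\P^\J \hookrightarrow \bigoplus_{i\in\J}\ZZ\cdot D_i$, i.e.\ the diagonal entries of the Smith normal form of a matrix presenting that sublattice; their product $s_\J$ equals the order of the torsion subgroup of $\bigl(\bigoplus_{i\in\J}\ZZ\cdot D_i\bigr)/\ker\P^\J \cong \im\P^\J$, which is the number of connected components of the quasitorus $G\cdot z_\J$. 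Your parenthetical remark that the freeness of $\Cl(X)$ is ``not strictly needed'' is the tell that your argument proves too much: if $s_\J$ were always $1$, every orbit closure would be irreducible, contradicting the paper's double pillow example, where $\Cl(X)\simeq \ZZ/2\ZZ\oplus\ZZ^2$ and $\overline{G\cdot z}$ is a union of \emph{two} planes (so $s_\J = 2$ there).

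The fix is short and is precisely where the hypothesis enters: $\im\P^\J$ is a subgroup of $\Cl(X)$, so if $\Cl(X)\simeq\ZZ^{k-n}$ is free then $\im\P^\J\cong \bigl(\bigoplus_{i\in\J}\ZZ\cdot D_i\bigr)/\ker\P^\J$ is torsion-free, whence all invariant factors of the sublattice $\ker\P^\J$ equal $1$ and $s_\J = 1$. With that corrected, the rest of your argument --- a single irreducible component equal to $X_{\A_\J}$ up to diagonal scaling, and the degree formula with $\Vol(\Delta^\J)$ read as the appropriately normalized volume so that it agrees with $\tfrac{(k-n)!}{q_\J}$ times the Euclidean volume --- goes through as you describe.
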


Proposition \ref{prop:orbitdeg} provides a direct way of computing $\deg(\overline{G \cdot z})$ for any $z \in \pi^{-1}(U)$. We implemented this in Macaulay2 \cite{M2}. The code will be made available at \url{https://mathrepo.mis.mpg.de}. The following two examples illustrate applications of Proposition~\ref{prop:orbitdeg}.

\begin{example}
The \emph{double pillow} is the toric variety $X = X_\A \subset \PP^4$ where $\A$ consists of the columns of $A = \begin{bmatrix}
1&1&1&1&1\\1&0&0&-1&0\\0&1&0&0&-1
\end{bmatrix}$ \cite[Subsection 3.3]{sottile2017ibadan}.
The fan corresponding to the double pillow has facet matrix $ F= \begin{bmatrix}
1&1&-1&-1\\1&-1&-1&1
\end{bmatrix}$
with Smith normal form 
$$ \begin{bmatrix}
0&-1&0&0\\-1&1&0&0\\0&1&0&1\\1&0&1&0
\end{bmatrix} 
F^\top \begin{bmatrix}
0 & -1\\ -1&-1
\end{bmatrix}
= \begin{bmatrix}
1&0\\0&2\\0&0\\0&0
\end{bmatrix}.$$
This shows that the class group has torsion: $ \Cl(X) \simeq \ZZ/2\ZZ \oplus \ZZ^2$ via 
$$[\sum_{i=1}^4 a_i D_i] \mapsto (a_2-a_1 + 2\ZZ, a_2+a_4,a_1+a_3).$$ The reductive group $G$ is isomorphic to $\{-1,1\} \oplus (\CC^*)^2  $ via
$(w, \lambda_1,\lambda_2) \mapsto(w^{-1}\lambda_2, w \lambda_1, \lambda_2, \lambda_1)$. The closure is a union of two planes in $\PP^4$, and every orbit closure $\overline{G \cdot z}$ for $z \in (\CC^*)^k$ is equal to $\overline{G}$ up to a diagonal change of coordinates. Therefore, these orbits have degree 2. The orbit polytope $\Delta^\J$ is the standard simplex in $\RR^2$.
\end{example}

\begin{example} \label{ex:hirzebruch2}
Consider again the Hirzebruch surface $\HH_2$ from Example \ref{ex:hirzebruch}. The Smith normal form of the facet matrix $F^\top$ is given by 
$$ \begin{bmatrix}
-1&0&0&0\\
0&-1&0&0\\
-1&2&-1&0\\
0&-1&0&-1
\end{bmatrix} F^\top \begin{bmatrix}
1&0\\0&1
\end{bmatrix} = \begin{bmatrix}
-1&0\\0&-1\\0&0\\0&0
\end{bmatrix}, \quad \text{hence} \quad P'' = \begin{bmatrix}
-1&2&-1&0\\
0&-1&0&-1
\end{bmatrix}.$$
The class group $\Cl(\HH_2)$ is free, $G \simeq (\CC^*)^2$ and the orbit of $z = (z_1, \ldots, z_4) \in \CC^4 \setminus Z$ is parametrized by 
\begin{equation} \label{eq:orbithirz}
 G \cdot z = \{ (z_1 \lambda_1^{-1}, z_2 \lambda_1^2 \lambda_2^{-1}, z_3 \lambda_1^{-1}, z_4 \lambda_2^{-1}) ~|~ (\lambda_1,\lambda_2) \in (\CC^*)^2 \}.
 \end{equation}
The orbit polytopes are shown in Figure \ref{fig:orbitpolytopes}. 
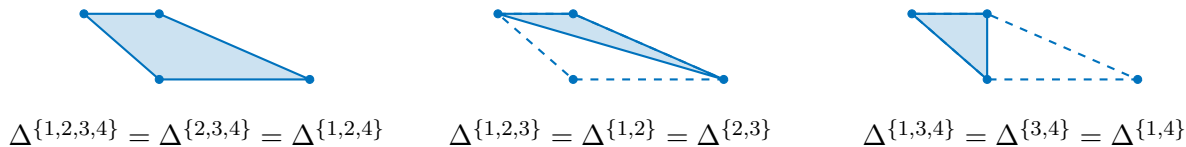
\begin{figure}
\centering
\begin{tikzpicture}
\begin{axis}[%
width=6.500in,
height=1.100in,
scale only axis,
xmin=-1.5,
xmax=15,
xtick = \empty,
ymin=-1.2,
ymax=2,
ytick = \empty,
axis background/.style={fill=white},
axis line style={draw=none} 
]
\addplot [color=mycolor1,solid,thick, fill opacity = 0.2, fill = mycolor1,forget plot]
  table[row sep=crcr]{%
0	1\\
1	1\\
3	0\\
1	0\\
0	1\\
};
\addplot[only marks,mark=*,mark size=1.5pt,mycolor1
        ]  coordinates {
    (0,1) (1,0) (3,0) (1,1) 
};
\node at (axis cs:1.5,-0.8) {\small $\Delta^{\{1,2,3,4\}}  = \Delta^{\{2,3,4\}} = \Delta^{\{1,2,4\}} $};

\addplot [color=mycolor1,dashed,thick, fill opacity = 0.0, fill = mycolor1,forget plot]
  table[row sep=crcr]{%
5.5	1\\
6.5	1\\
8.5	0\\
6.5	0\\
5.5	1\\
};
\addplot [color=mycolor1,solid,thick, fill opacity = 0.2, fill = mycolor1,forget plot]
  table[row sep=crcr]{%
5.5	1\\
6.5	1\\
8.5	0\\
5.5	1\\
};
\addplot[only marks,mark=*,mark size=1.5pt,mycolor1
        ]  coordinates {
    (5.5,1) (6.5,0) (8.5,0) (6.5,1) 
};
\node at (axis cs:7,-0.8) {\small $ \Delta^{\{1,2,3\}} = \Delta^{\{1,2\}} =  \Delta^{\{2,3\}} $};

\addplot [color=mycolor1,dashed,thick, fill opacity = 0.0, fill = mycolor1,forget plot]
  table[row sep=crcr]{%
11	1\\
12	1\\
14	0\\
12	0\\
11	1\\
};
\addplot [color=mycolor1,solid,thick, fill opacity = 0.2, fill = mycolor1,forget plot]
  table[row sep=crcr]{%
11	1\\
12	1\\
12	0\\
11	1\\
};
\addplot[only marks,mark=*,mark size=1.5pt,mycolor1
        ]  coordinates {
    (11,1) (12,0) (14,0) (12,1) 
};
\node at (axis cs:12.5,-0.8) {\small $ \Delta^{\{1,3,4\}} = \Delta^{\{3,4\}} = \Delta^{\{1,4\}} $};

\end{axis}
\end{tikzpicture}%
\caption{Orbit polytopes from Example \ref{ex:hirzebruch2}.}
\label{fig:orbitpolytopes}
\end{figure}
By Corollary \ref{cor:freeclass}, if $z \in (\CC^*)^3$, $\overline{G \cdot z}$ has degree $\Vol(\Delta^{\{1,2,3,4\}}) = 3$. If $z \in (\CC^*)^{\{1,3,4\}} \cup  (\CC^*)^{\{1,2,3\}} $, the degree drops to $\Vol(\Delta^{\{1,3,4\}}) = \Vol(\Delta^{\{1,2,3\}}) = 1$. 
\end{example}

\section{Cox homotopies: coefficient-parameter theory and algorithms } \label{sec:mainthm}

As in the previous section, $X = X_\Sigma$ is an $n$-dimensional toric variety such that $\pi: \CC^k \setminus Z \to X$ is an almost geometric quotient which is constant over $G$-orbits on $\CC^k \setminus Z$.
Moreover, on the dense open subset $U \subset X$, $\pi_{\pi^{-1}(U)}: \pi^{-1}(U) \rightarrow U$ is geometric. 

This section describes our homotopy algorithm for solving polynomial systems on $X$ by tracking points in the total coordinate space $\CC^k$.
For any $p\in U$, we will slice the $G$-orbit $\pi^{-1}(p)$ with a general linear space of dimension $n$ to get degree-many representatives for $\pi^{-1}(p)$.
Theorem~\ref{thm:coxhomotopy} establishes that the orbits in our homotopy remain disjoint.
Thus, we may track only one representative per orbit when the target system is generic in the sense of Theorem~\ref{thm:bkk}.
In Section~\ref{subsec:endgame}, we consider the case of a non-generic target system.
This presents a subtlety not encountered in the multihomogeneous case---namely, some paths in the total space $\CC^k$ may either diverge or converge to a point in the base locus (see Example~\ref{ex:hirzebruch3}).
We propose an endgame in Section~\ref{subsec:endgame} for finding a path whose endpoint represents a point in $X.$
Pseudocode for the main algorithm and subroutines are in Section~\ref{subsec:algorithms}.
Finally, in Subsection~\ref{subsec:orthslice} we outline the orthogonal patching strategy considered in Experiment~\ref{exp:hirzebruch}.

\subsection{Coefficient-parameter homotopy in the dense torus}\label{subsec:mainthm}
 
As our homotopy is akin to a coefficient-parameter homotopy, we consider the \emph{family} of all systems with fixed supports $\A = (\A_1, \ldots, \A_n)$ in $M$ (cf.~Subsection~\ref{subsec:homogenization}). Thus, we consider $\h_i (t;c) = \sum_{m \in \A_i} c_{i,m} t^m, \, i = 1, \ldots, n,$ where the parametric coefficients $c_{i,m}$ may vary. We denote the affine space of parameters by $\CC^\A$. We assume hereafter that the Minkowski sum $\Pol = \Pol_1 + \cdots + \Pol_n$, where $\Pol_i = \conv(\A_i),$ has full dimension $n,$ so that the BKK number $\MV(\Pol_1,\ldots,\Pol_n)$ from Theorem~\ref{thm:bkk} is positive.

The homotopy considered in our main algorithm (Algorithm~\ref{alg:mainalg}) is given by
\begin{equation}
  \label{eq:phiHomotopyHomog}
\left(\calH \left(x; \tau\right), \L (x)\right) = 0, 
\end{equation}
where $\L(x) = Ax + b$ determines an (affine) linear space $L = \L^{-1}(0) =  \{ x \in \CC^k \mid \L (x)=0 \},$ the path $\phi : [0,1] \to \CC^{\A}$ is assumed to be smooth, and $\calH$ is obtained by homogenizing
\begin{equation}
  \label{eq:phiHomotopyTorus}
\calHhat (t; \tau) = \left(\h_1\left(t; \phi (\tau)\right), \ldots ,\h_n\left(t; \phi (\tau)\right)\right) = 0.
\end{equation}
We also need that $\phi ,$ $A,$ and $b$ satisfy certain genericity conditions as outlined in Theorem~\ref{thm:coxhomotopy}.
We will refer to such homotopies as \alert{Cox homotopies.} 

Theorem~\ref{thm:coxhomotopy} gives the basis for Algorithm~\ref{alg:mainalg}. To prove this theorem, it is natural to consider the incidence variety defined for fixed $L$ by
\begin{equation}
\label{eq:hIncDef}
    \hInc = \left\{ (x, c) \in \left( \left( \CC^k \setminus Z \right) \cap L \right) \times \CC^\A \mid \calF (x;c) =  0 \right\}.
\end{equation}

Solution paths for the Cox homotopy in Equation~\eqref{eq:phiHomotopyHomog} correspond to \emph{lifts} of the parameter path $\phi $ to the variety $\hInc .$
For generic $\phi ,$ $A,$ and $b,$ Theorem~\ref{thm:coxhomotopy} implies that there are exactly $d \D $ such paths, where $d$ denote the degree of the $G$-orbit closure $\overline{G \cdot (e_1 + \cdots + e_k)}.$
Note that $d$ may be computed using Remark~\ref{rem:genericorbit}.

\begin{thm} \label{thm:coxhomotopy}
There exist Zariski open subsets $\U_\A \subset \CC^\A$ and $\U_{A, b} \subset \CC^{(k-n) \times k} \times \CC^{k-n}$ such that for all $c^* \in \U_\A$ and all $(A,b) \in \U_{A, b}$, with $\L(x) = Ax + b$, we have 
\begin{enumerate}
\item $V_T(\calHhat(t;c^*)) = \{\z_1, \ldots, \z_\D \}$ consists of $\D$ points, \label{item:mt1}
\item $V_{(\CC^*)^k}(\calH(x;c^*), \L(x)) = \{ z_{ij} ~|~ i = 1, \ldots, \D, j = 1, \ldots, d \}$ consists of $d \D$ points,  \label{item:mt2}
\item for some labeling of these points, $\pi(z_{ij}) = \z_i$ for all $i,j$. \label{item:mt3}
\end{enumerate}
For fixed $(A,b) \in \U_{A,b}$ and a smooth function $\phi: [0,1] \rightarrow \U_\A$, 
\begin{enumerate}[resume]
\item the homotopy $\calHhat(t;\phi(\tau)) = 0$ has $\D$ trackable paths $\zeta_i : [0,1] \to (\CC^*)^n, i = 1, \dots, \D,$\label{item:mt4}
\item the homotopy $(\calH(x;\phi(\tau)), \L (x)) = 0$ has $d \D$ trackable paths $z_{i j} : [0,1] \to (\CC^*)^k,$

$i = 1, \ldots, \D, j = 1, \ldots, d$,\label{item:mt5}
\item after relabeling, we have that $\pi\circ z_{ij} = \z_i$ for all $i,j$. \label{item:mt6}
\end{enumerate}
\end{thm}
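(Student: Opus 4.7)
The strategy is to leverage the BKK theorem (Theorem~\ref{thm:bkk}) together with the orbit analysis of Section~\ref{sec:orbitdeg}, using the key identity
\begin{equation*}
f_i(x) = x^{a_i}\,\hat f_i(\pi(x)) \qquad \text{for } x \in (\CC^*)^k,
\end{equation*}
which follows directly from the definition of the homogenization \eqref{eq:homogenization} and the parametrization \eqref{eq:monmap} of $\pi|_{(\CC^*)^k}$. This identity implies that $\pi$ sends $V_{(\CC^*)^k}(\calH(\cdot;c^*))$ onto $V_T(\calHhat(\cdot;c^*))$ with fibers equal to $G$-orbits, and is the bridge between the toric and the Cox pictures.

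\emph{Items (\ref{item:mt1}) and (\ref{item:mt4}).} Take $\U_\A^{(1)} \subset \CC^\A$ to be the Zariski open dense subset guaranteed by Theorem~\ref{thm:bkk} on which $V_T(\calHhat(\cdot;c))$ is reduced of cardinality $\D$. Standard coefficient-parameter homotopy theory (see e.g.\ \cite{SW05}) then yields, for a generic smooth $\phi : [0,1] \to \U_\A^{(1)}$, the existence of $\D$ trackable paths $\zeta_i : [0,1] \to (\CC^*)^n$, since the Jacobian of $\calHhat$ with respect to $t$ is nonsingular at $\zeta_i(\tau)$ for $\tau \in [0,1]$.

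\emph{Items (\ref{item:mt2}) and (\ref{item:mt3}).} For a point $\zeta_i \in V_T(\calHhat(\cdot;c^*))$, the fiber $\pi^{-1}(\zeta_i) \cap (\CC^*)^k$ is a single $G$-orbit, whose projective closure $\overline{G \cdot z_i} \subset \PP^k$ has degree $d$ by Corollary~\ref{cor:onlydep} and the computation of Remark~\ref{rem:genericorbit}. We choose $\U_{A,b} \subset \CC^{(k-n)\times k} \times \CC^{k-n}$ so that the corresponding linear space $L$ of codimension $k-n$ (a) does not meet the base locus $Z$ (which has codimension $\geq 2$), (b) intersects $\overline{G \cdot z_i} \setminus (G \cdot z_i)$ trivially (avoiding the boundary of each orbit in $\PP^k$), and (c) is transverse to each orbit $G \cdot z_i$. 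Conditions (a)--(c) are open and hold for $L$ outside a proper Zariski closed subset, since the bad loci have strictly positive codimension. By B\'ezout's theorem, $L$ then meets each $\overline{G \cdot z_i}$ in exactly $d$ points of $(\CC^*)^k$, yielding $d\D$ points total. The identity $f_i(x) = x^{a_i}\hat f_i(\pi(x))$ shows each such point projects under $\pi$ to some $\zeta_i$, giving (\ref{item:mt3}).

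\emph{Items (\ref{item:mt5}) and (\ref{item:mt6}).} Fix $(A,b) \in \U_{A,b}$ and form the incidence variety $\hInc$ of \eqref{eq:hIncDef}. Projection to $\CC^\A$ is dominant with generic fiber of size $d\D$ by the previous step. By refining $\U_\A$ to a smaller open set on which this fiber is reduced (using generic smoothness together with the transversality established above), the Jacobian of $(\calH(\cdot;c),\L)$ in $x$ is nonsingular at each point of the fiber over any $c \in \U_\A$. The implicit function theorem applied along $\phi(\tau)$ gives $d\D$ trackable paths $z_{ij}:[0,1] \to (\CC^*)^k$. Continuity of $\pi$, together with the identity at the start, forces $\pi \circ z_{ij}$ to be a path in $V_T(\calHhat(\cdot;\phi(\tau)))$; uniqueness of trackable path lifts (from Section~\ref{subsec:homotopy}) then identifies $\pi \circ z_{ij}$ with some $\zeta_i$, yielding (\ref{item:mt6}) after relabeling.

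\emph{Main obstacle.} The delicate step is (\ref{item:mt5}): one must verify that a generic $L$ is simultaneously transverse to every $G$-orbit in the preimage of every toric solution, across a full Zariski open subset of parameters. This requires a uniform version of the orbit analysis of Section~\ref{sec:orbitdeg} along the incidence variety, which is where the genericity in both $\U_\A$ and $\U_{A,b}$ must be coordinated. A standard dimension-count on $\hInc$ combined with Sard's theorem should suffice, but care is needed because orbits can degenerate near the boundary $X \setminus T$—this is exactly why we restrict to $c^* \in \U_\A$ where all solutions lie in the dense torus.
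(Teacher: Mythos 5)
Your proposal is correct and follows essentially the same route as the paper: both rest on the BKK count $\D$ in the dense torus, the orbit degree $d$ computed in Section~\ref{sec:orbitdeg} (via Corollary~\ref{cor:onlydep} and Remark~\ref{rem:genericorbit}), and a genericity argument on the incidence variety $\hInc$ yielding transversality and hence trackable, disjoint lifts. The only difference is packaging --- the paper phrases the fiber count and the invertibility of the Jacobian through the branched cover $\pi_\A : \hInc \to \CC^\A$ and its factorization $\pi_\A = \pi_\A^1 \circ \pi_\A^2$ of degrees $d$ and $\D$, whereas you argue directly with B\'ezout on the orbit closures and the implicit function theorem; the uniformity-in-$(c,A,b)$ bookkeeping you flag as the main obstacle is handled just as implicitly in the paper, which fixes a generic $L$ first and takes $\U_\A$ to be the complement of the resulting branch locus.
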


\begin{proof}
The variety $V_{(\CC^*)^k}(\calH(x;c^*))$ is a union of $G$-orbits, which have dimension $k-n$ by Lemma \ref{lem:orbitdim}.
We take $\U_{A, b}$ such that $G \cdot (e_1 + \cdots + e_k) \cap L$ always consists of $d$ points for $L = \L^{-1}(0)$.
Theorem~\ref{thm:bkk}, together with Remark~\ref{rem:genericorbit}, implies that the coordinate projection $\pi_\A: \hInc \to \CC^\A$ is a dominant map with generically finite fibers.
More precisely, $\pi_\A$ is a branched cover of degree $d \D,$ meaning that the fiber $\pi_{\A}^{-1} (c)$ consists of $d \D $ points for all $c\in \U_\A,$ where $\CC^\A \setminus \U_\A$ denotes the \alert{branch locus} of $\pi_\A.$
The set $\U_\A$ is Zariski open by~\cite[Theorem 2.29]{shafarevich1994basicag}.
We also observe that $\pi_\A$ has a factorization induced by $\pi$. That is, we may write $\pi_\A = \pi_\A^1 \circ \pi_\A^2$ where $\pi_\A^1$ and $\pi_\A^2$ are branched covers of degree $d , \D,$ respectively.
So far we have shown (1)--(3).
Now, whenever $(z,c)\in \pi_{\A}^{-1} (c)$ and $c\in \U_\A,$ the derivative $D_{z,c} \, \pi_{\A}$ is an invertible linear map by~\cite[Theorems 2.30]{shafarevich1994basicag}.
This gives (5).
Similarly, (4) follows for $\zeta = \pi (z)$ by considering $D_{\zeta , c} \, \pi_\A^1$ and applying the chain rule.
For (6), we simply note that $(A,b) \in \U_{A,b}$ implies there are exactly $d$ paths $z_{i1}, \ldots , z_{id}$ for each $\zeta_i$ satisfying $\zeta_i = \pi \circ z_{ij}.$
\end{proof}

\begin{remark} \label{rem:movinglinspace}
It is possible to allow the linear space $L$ to change as $\tau$ moves from $1$ to $0$. In other words, we could consider a Cox homotopy given by $(\calH(x;\tau), \L_\tau(x)) = 0$ where $L(\tau) = \L_\tau^{-1}(0)$ is a sufficiently generic, continuous path in the Grassmannian $G(n,k)$. In Subsection \ref{subsec:orthslice} we will exploit this observation to propose the strategy of \alert{orthogonal slicing} for Cox homotopies.
\end{remark}

\begin{remark} \label{rem:straightline}
Since the complement of $\U_\A$ is of complex codimension at least 1 in $\CC^\A$, the image of a general 1-real-dimensional parameter path $\phi: [0,1] \rightarrow \CC^\A$ with $c_0 = \phi(0) \in \U_\A$, $c_1 = \phi(1) \in \U_\A$ is contained in $\U_\A$. A standard trick for obtaining such a general path is by setting $\phi(\tau) = (1- \tau) c_0 + \gamma \tau c_1$, where $\gamma \in \CC$ is a random constant. This is known as the \alert{gamma trick}, see for instance \cite[Lemma 7.1.3]{SW05}. In this case, the Cox homotopy $(\calH, \L)$ is given by $\calH(x;\tau) = (1-\tau) \calG + \gamma \tau \calF$, where $\calF(x) = \calH(x;0)$ and $\calG(x) = \calH(x;1)$. If the start system $(\calG, \L)$ is suitably generic, we can set $\gamma = 1$. Homotopies obtained in this way are often called \alert{straight line homotopies} \cite[Sec 2.1]{BertiniBook}.
\end{remark}

An important consequence of Theorem \ref{thm:coxhomotopy} is that for parameter paths $\phi: [0,1] \rightarrow \U_\A$ it is sufficient to track only $\D = \MV(P_1,\ldots,P_n)$ paths in the Cox homotopy $(\calH,\L)$ to track the paths defined by $\calH(x;\tau)$ on $X$. Indeed, for $i = 1, \ldots, \D$, it suffices to track only one of the paths $z_{ij}(\tau), j = 1, \ldots, d$ for $\tau$ going from 1 to 0, since all of these will land on a representative of the same $G$-orbit when $\tau = 0$.

\subsection{Degenerating orbits and a specialized endgame}\label{subsec:endgame}

For a set $\A = (\A_1, \ldots, \A_n)$ of supports, we fix a smooth parameter path $\phi: [0,1] \rightarrow \CC^\A$ and consider the corresponding homotopy $\calHhat(t;\tau)$. We denote $\calFhat(t) = \calHhat(t;0),~\calGhat(t) = \calHhat(t;1)$ and $\calF, \calG,\calH$ for the associated homogeneous counterparts. We assume that $\calGhat$ defines $\D = \MV(\Pol_1, \ldots, \Pol_n)$ isolated solutions in $T \subset X$, but $\calFhat$ does not. That is, $|V_T(\calFhat)|< \D$.

In this setup, if $\phi$ is suitably generic, Theorem \ref{thm:coxhomotopy} tells us that $\calHhat (t; \tau), ~ \tau \in (0,1]$ defines $\D$ disjoint paths $\tau \mapsto \z_i(\tau), i = 1, \ldots, \D$ in the dense torus $T $. In this section we investigate what happens for $\tau \rightarrow 0$. Since $|V_T(\calFhat)|< \D$ by assumption, at least one of these paths, say $\zeta_i(\tau)$, moves out of the torus $T$ for $\tau \rightarrow 0$. By compactness of $X \supset T$, the endpoint $\lim_{\tau \rightarrow 0^+} \zeta_i(\tau)$ exists in $X$ and it satisfies 
$$ \z_i(0) = \lim_{\tau \rightarrow 0^+} \zeta_i(\tau) \in X \setminus T .$$
We assume that $\z_i(0) \in U$ is an isolated point of $V_X(\calF)$, where $U$ is as in Theorem \ref{thm:cox}. 
We first show that in an analytic neighborhood $\U \subset \CC$ of $\tau = 0$, the $G$-orbit $\pi^{-1}(\z_i(\tau))$ has a representative $r(\tau) = (r_1(\tau), \ldots, r_k(\tau))$ given by Puiseux series. We use the notation $\Pow$ for the power series ring, $\Puis$ for the field of Puiseux series, and $\val: \Puis \rightarrow \QQ$ for its standard valuation. As in Section \ref{sec:orbitdeg}, we let $(\CC^*)^\J$ be the $(\CC^*)^k$-orbit $\{ z \in \CC^k \setminus Z ~|~ z_i \ne 0, \text{ for all } i \in \J \text{ and } z_i = 0 \text{ for all } i \notin \J \}$ and work with index sets $\J$ such that $(\CC^*)^\J \subset \pi^{-1}(U)$.
\begin{lem} \label{lem:representative}
Let $\tau \mapsto \z_i(\tau)$ be a solution path of $\calHhat(t;\tau)$ such that $\z_i(0) \in U$ is an isolated point in $V_X(\calF)$, where $U$ is as in Theorem \ref{thm:cox}. In a neighborhood $\U \subset \CC$ of $\tau = 0$, there is an algebraic function $r: \U \rightarrow \CC^k \setminus Z,~ r \in \Puis^k$ such that $\pi^{-1}(\z_i(\tau)) = G \cdot r(\tau)$ for all $\tau \in \U$. Moreover, if $\J$ is such that $\z_i(0) \in \pi((\CC^*)^\J)$, the valuations of $r(\tau) = (r_1(\tau), \ldots, r_k(\tau))$ satisfy
\begin{equation*}
\val(r_i(\tau)) > 0,~ i \notin \J \qquad \text{and} \qquad \val(r_i(\tau)) = 0, ~i \in \J.
\end{equation*}
\end{lem}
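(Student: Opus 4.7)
The plan is to work locally in an affine toric chart near $\z_i(0)$, and to lift the algebraic path $\z_i(\tau)$ in $X$ to Cox coordinates via a local section of $\pi$. Let $\sigma \in \Sigma$ be the unique cone with $\z_i(0) \in O(\sigma)$. By the orbit-cone correspondence and the assumption $\z_i(0) \in \pi((\CC^*)^\J)$, the rays of $\sigma$ are exactly $\{\rho_j : j \notin \J\}$, and $\sigma$ is simplicial since $\z_i(0) \in U$. A neighborhood of $\z_i(0)$ in $X$ is the affine toric chart $U_\sigma$, and in Cox coordinates $\pi^{-1}(U_\sigma) = \{x \in \CC^k \setminus Z \mid x_j \neq 0 \text{ for all } j \in \J\}$.

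First I would construct an explicit local section $s: U_\sigma \to \pi^{-1}(U_\sigma)$. Since $\sigma$ is simplicial, the rays $\{\rho_j : j \notin \J\}$ are linearly independent, so any $v \in \ker F$ supported on $\{1,\ldots,k\}\setminus \J$ must vanish; it follows that $P''_{:,\J}$ has full rank $k - n$. Pick $j_1,\dots,j_{k-n} \in \J$ such that the corresponding columns of $P''$ form a nonsingular submatrix. Using the torus part $(\CC^*)^{k-n} \subset G$ of the orbit parametrization \eqref{eq:torusinclusion}, we can normalize $x_{j_1} = \cdots = x_{j_{k-n}} = 1$ on each fiber $\pi^{-1}(p)$ with $p \in U_\sigma$; the remaining Cox coordinates then become rational monomials in the affine coordinates of $U_\sigma$, yielding the section $s$ (well-defined up to an action of the finite torsion group $W_1 \oplus \cdots \oplus W_n$).

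The main step is to lift $\z_i(\tau)$ to Puiseux series via $s$. Since $\z_i(0)$ is isolated in $V_X(\calF)$, the path $\z_i(\tau)$ is a single analytic branch of an algebraic curve in $X$, coming from the incidence variety $\hInc$ (with $(A,b)$ specialized and restricted to the relevant irreducible component). In affine coordinates on $U_\sigma$ the components of $\z_i(\tau)$ are algebraic functions of $\tau$, and the classical Newton-Puiseux theorem provides convergent Puiseux series expansions on some complex neighborhood $\U$ of $0$. Composing with $s$ produces $r(\tau) \in \Puis^k$ with $\pi(r(\tau)) = \z_i(\tau)$ and $r(\tau) \in \pi^{-1}(U_\sigma) \subset \CC^k \setminus Z$ throughout $\U$; the identity $G \cdot r(\tau) = \pi^{-1}(\z_i(\tau))$ is automatic since $\z_i(\tau) \in U$ and $\pi$ is a geometric quotient there.

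Finally, the valuation claim follows by tracking divisors. For $j \in \J$, either $r_j \equiv 1$ from the normalization, or $r_j$ is a Laurent monomial in the affine coordinates of $U_\sigma$ that is nonvanishing at $\z_i(0)$, so in either case $\val(r_j) = 0$. For $j \notin \J$ (i.e.\ $\rho_j \in \sigma(1)$), $r_j$ is, up to a monomial unit in the $r_{j_\ell}$, a local defining equation of the divisor $D_j$ on $U_\sigma$, and since the path $\z_i(\tau)$ meets $D_j$ only at $\tau = 0$, the coordinate $r_j$ vanishes to positive order, giving $\val(r_j) > 0$. The main obstacle I anticipate is carefully handling the torsion summands $W_i$ in $G$: the section $s$ is only strictly well-defined after passing to a finite \'etale cover of $U_\sigma$, which is precisely what allows $r(\tau)$ to carry genuinely fractional Puiseux exponents rather than integer ones, as permitted by the statement.
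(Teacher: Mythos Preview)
Your argument is correct, but it takes a different route from the paper's. The paper does not build a local section of $\pi$ at all: instead it fixes a generic affine $(n)$-plane $L = \L^{-1}(0)$ in $\CC^k$, observes that $\overline{\pi^{-1}(\z_i(0))}$ has dimension $k-n$ by Lemma~\ref{lem:orbitdim}, picks any point $r_0 \in L \cap \pi^{-1}(\z_i(0))$, and notes that $r_0$ is an isolated solution of the square system $(\calH(x;0),\L(x)) = 0$ in $\CC^k \setminus Z$ because $\z_i(0)$ is isolated in $V_X(\calF)$. An algebraic branch $r(\tau)$ through $r_0$ then exists by standard deformation of isolated solutions, and the valuation statement is immediate from $r(0) \in (\CC^*)^\J$.

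Your section-based argument is more geometric and makes the Puiseux structure explicit, at the cost of the torsion/cover bookkeeping you flag at the end (and of justifying that the remaining Cox coordinates are monomials in the chart coordinates, which is true but uses the simplicial structure more carefully than you indicate). The paper's slice-based argument is shorter and has the practical advantage that the representative $r(\tau)$ it produces already satisfies $\L(r(\tau)) = 0$; this is exactly the representative tracked by the Cox homotopy, and it feeds directly into the proof of Theorem~\ref{thm:boundary}, where the count of paths landing on $G \cdot r(0)$ is phrased in terms of $L$. Your representative, normalized by $r_{j_1} = \cdots = r_{j_{k-n}} = 1$, proves the lemma as stated but would need an extra translation step before being used downstream.
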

\begin{proof}
Since $\z_i(0) \in U \subset X$, $\overline{\pi^{-1}(\z_i(0))}$ has dimension $k-n$ (Lemma \ref{lem:orbitdim}) and for almost all choices of the linear space $L = \L^{-1}(0)$, $L \cap \overline{\pi^{-1}(\z_i(0))}$ consists of isolated points. For such a fixed choice of $L$, pick a point $r_0 \in L \cap \overline{\pi^{-1}(\z_i(0))} \subset V_{\CC^k \setminus Z}(\calF,\L)$. Since $\z_i(0)$ is an isolated point in $V_X(\calF)$, $r_0$ is an isolated solution of $(\calF(x),\L(x)) = (\calH(x;0), \L(x))$ in $\CC^k \setminus Z$ and there is an open neighborhood $\U$ of $\tau = 0$ and an algebraic function $r: \U \rightarrow \CC^k \setminus Z$ satisfying $r(0) = r_0$ and $(\calH(r(\tau);\tau), \L(r(\tau))) = 0$. The statement about the valuations follows immediately from $r(0) \in \pi^{-1}(\z_i(0)) \subset (\CC^*)^\J$.
\end{proof}

\begin{remark} \label{rem:power}
By reparametrizing, e.g.\ by setting $\tau \leftarrow \tau^m$ for a positive integer $m$, we may assume that $r(\tau)$ in Lemma \ref{lem:representative} is given by a power series. 
\end{remark}

Take $\tau^* \in \U$ where $\U$ is as in Lemma \ref{lem:representative} and suppose that $r(0) \in (\CC^*)^\J$. Our aim is to formalize the following intuition. A $G$-orbit $G \cdot r(\tau^*) \subset \CC^k \setminus Z \subset \PP^k$ for $r(\tau^*) \in (\CC^*)^k$ hits the general linear space $L$ in $d$ points, where $d = s (k-n)! \Vol(\Delta^{\{1,\ldots,k\}})$ (see Remark \ref{rem:genericorbit}). When we move $r(\tau^*)$ towards $r(0) \in (\CC^*)^{\J}$, the points in $(G \cdot r(\tau^*)) \cap L$ move in $L$. By Proposition \ref{prop:orbitdeg}, as the representative $r(\tau)$ enters $(\CC^*)^{\J}$ the orbit degree might drop, meaning that possibly $| (G \cdot r(\tau^*) ) \cap L | > | (G \cdot r(0) ) \cap L |$. The `missing' points in $(G \cdot r(0) ) \cap L$ may be due to some points traveling into $(L \cap Z)$, some points traveling to infinity out of the total coordinate space, or two different points colliding at $\tau = 0$. We are interested in answering the following question: how many of the points in $(G \cdot r(\tau^*) ) \cap L$ eventually land in $ (G \cdot r(0) ) \cap L$? Equivalently, if $r(\tau)$ is a representative for $\pi^{-1}(\z_i(\tau))$, how many of the representative paths $z_{ij}(\tau)$ in $\CC^k \setminus Z$ travel to the orbit $G \cdot r(0)$ as $\tau \rightarrow 0$?

\begin{thm} \label{thm:boundary}
Let $\U \subset \CC$ be an open neighborhood of $ 0 \in \CC$ and let $r(\tau) = (r_1(\tau), \ldots, r_k(\tau))$ be a map $\U \rightarrow \CC^k \setminus Z$ such that $r_i(\tau) \in \Pow \subset \Puis$. Let $\J$ be such that $r(0) \in (\CC^*)^\J$ and suppose that 
\begin{equation} \label{eq:valuations}
\val(r_i(\tau)) > 0,~ i \notin \J \qquad \text{and} \qquad \val(r_i(\tau)) = 0, ~i \in \J.
\end{equation}
For almost all affine maps $\L(x) = Ax + b$, there are $s (k-n)! \Vol (\Delta^\J)$ $k$-tuples $z(t) = (z_1(\tau), \ldots, z_k(\tau)) \in \Pow^k$ such that 
$$\val(z_i(\tau)) > 0,~ i \notin \J \qquad \text{and} \qquad \val(z_i(\tau)) = 0, ~i  \in \J$$
 and $ z(\tau) \in G \cdot r(\tau),~A~z(\tau) +b = 0,~\text{for $\tau$ in a neighborhood $\U' \subset \U$ of $0 \in \CC$.}$ 
\end{thm}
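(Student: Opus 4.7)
The plan is to parametrize $G \cdot r(\tau)$ using Lemma~\ref{lem:orbitparam}, translate the valuation conditions on $z(\tau)$ into constraints on the orbit parameters $(w,\lambda)$, count the resulting initial solutions at $\tau = 0$ via Proposition~\ref{prop:orbitdeg}, and then lift each to a power series solution using the implicit function theorem.

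By Lemma~\ref{lem:orbitparam}, any $z \in G \cdot r(\tau)$ has the form $z_i = w^{P'_{:,i}} \lambda^{P''_{:,i}} r_i(\tau)$ with $(w,\lambda) \in (\bigoplus_{i=1}^n W_i) \oplus (\CC^*)^{k-n}$. Solutions to $Az+b=0$ lying in the orbit thus correspond bijectively to solutions $(w(\tau), \lambda(\tau))$ of the $\tau$-parametric system $\Phi_\tau(w,\lambda) := A\,(w^{P'_{:,i}} \lambda^{P''_{:,i}} r_i(\tau))_{i=1}^k + b = 0$. Since each $w_j$ lies in a finite group, $\val w = 0$, and the requirement $\val(z_i) = 0$ for $i \in \J$ forces $P''_{:,i}\cdot \val(\lambda) = 0$ for all $i\in \J$, i.e.\ $\val(\lambda) \in \ker(P''_{:,\J})^{\top}$. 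The hypothesis $r(0) \in (\CC^*)^\J \subset \pi^{-1}(U)$ ensures that $P''_{:,\J}$ has rank $k-n$ (as used in the proof of Proposition~\ref{prop:orbitdeg}), whence $\val(\lambda) = 0$ and $\lambda \in (\Pow^{\times})^{k-n}$. The remaining inequalities $\val(z_i) > 0$ for $i \notin \J$ then follow automatically from $\val(r_i(\tau)) > 0$.

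Specialized at $\tau = 0$, the map $\Phi_0$ sends the $\J$-coordinates to the orbit $G \cdot r(0) \subset (\CC^*)^\J$ and the other coordinates to $0$. Hence initial solutions of $\Phi_0(w, \lambda) = 0$ are in bijection with points of $\overline{G \cdot r(0)} \cap L$ lying in $(\CC^*)^\J$, and for $(A,b)$ in a suitable Zariski open set this linear section is transverse, avoids the boundary of the orbit closure, and has cardinality equal to $\deg \overline{G \cdot r(0)} = s(k-n)!\Vol(\Delta^\J)$ by Proposition~\ref{prop:orbitdeg}.

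For the lift, transversality of $L$ with $\overline{G \cdot r(0)}$ at each such $z_0 = \Phi_0(w_0, \lambda_0)$, combined with the injectivity of the orbit parametrization modulo the stabilizer $G_{z_0}$, implies that the Jacobian of $\Phi_\tau$ in $(w, \lambda)$ at $(w_0, \lambda_0, 0)$ is invertible; the implicit function theorem then produces a unique convergent power series lift $(w(\tau), \lambda(\tau))$ on a common open neighborhood $\U' \subset \U$ of $0$, and $z(\tau) := \Phi_\tau(w(\tau), \lambda(\tau)) \in \Pow^k$ has the advertised valuations. The main obstacle is cutting out the correct Zariski open set of admissible $(A,b)$: beyond transversality, one must exclude pairs for which an intersection point of $\overline{G \cdot r(0)} \cap L$ lies on the boundary $\overline{G \cdot r(0)} \setminus G \cdot r(0)$ or on the hyperplane at infinity in $\PP^k$. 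Since each such boundary stratum has dimension strictly less than $k-n$ inside $\overline{G \cdot r(0)}$, a standard dimension count for generic linear sections shows that the set of bad $(A,b)$ forms a proper subvariety of $\CC^{(k-n) \times k} \times \CC^{k-n}$.
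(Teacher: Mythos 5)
Your overall strategy---parametrize $G\cdot r(\tau)$ via Lemma~\ref{lem:orbitparam}, reduce to an initial system at $\tau=0$, count its solutions, and lift by the implicit function theorem---is the same as the paper's, and your transversality/IFT lifting step and the observation that $\val(\lambda)=0$ is forced by the rank of $P''_{:,\J}$ are both sound. The gap is in the counting step. You assert that the initial solutions of $\Phi_0(w,\lambda)=0$ are \emph{in bijection} with the points of $\overline{G\cdot r(0)}\cap L$ and that this intersection has cardinality $\deg\overline{G\cdot r(0)}=s\,(k-n)!\Vol(\Delta^\J)$. But Proposition~\ref{prop:orbitdeg} gives $\deg\overline{G\cdot r(0)}=\frac{s_\J}{q_\J}(k-n)!\Vol(\Delta^\J)$, which is not the quantity $s\,(k-n)!\Vol(\Delta^\J)$ appearing in the theorem (here $s=\prod_i s_i$ refers to the \emph{generic} orbit, cf.\ Remark~\ref{rem:genericorbit}). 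The bijection also fails: the parametrization $(w,\lambda)\mapsto\bigl(w^{P'_{:,i}}\lambda^{P''_{:,i}}r_i(0)\bigr)_{i\in\J}$ is $q_\J$-to-one onto each of the $s_\J$ components of $G\cdot r(0)$ (its kernel is $\Hom(\ZZ^{k-n}/\im P''_{:,\J},\CC^*)$, of order $q_\J$), and when $\Cl(X)$ has torsion the finite stabilizer $G_{r(0)}$ contributes further fibers. What the theorem counts is power-series \emph{tuples} $z(\tau)$, i.e.\ solution branches, which are distinguished by their values at $\tau\neq 0$ where $G$ acts freely; several of them may collide at $\tau=0$, so their number genuinely exceeds $|G\cdot r(0)\cap L|$ in general. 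The correct count comes from applying Kushnirenko's Theorem~\ref{thm:kush} directly to the initial (facet) system \eqref{eq:faceteq} in the parameter $\lambda$, which has exactly $(k-n)!\Vol(\Delta^\J)$ solutions in $(\CC^*)^{k-n}$ for each of the $s$ components of $G$; injectivity of the orbit parametrization away from $\tau=0$ (using surjectivity of $P''$) then turns these $s\,(k-n)!\Vol(\Delta^\J)$ parameter branches into that many distinct tuples $z(\tau)$. As written, your argument would either prove the wrong count or require you to verify the identity $|G_{r(0)}|\cdot\frac{s_\J}{q_\J}=s$, which you neither state nor prove.
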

\begin{proof}
The proof uses some notation from Section \ref{sec:orbitdeg}. In particular, we recall that $P$ is one of the unimodular matrices in the Smith normal form of the transposed facet matrix $F^\top$ and $P''$ contains its last $k-n$ rows. We first consider the case where $\Cl(X)$ is torsion free. By Lemma \ref{lem:orbitparam} it suffices to show that for almost all choices of $A,b$, there are $(k-n)! \Vol (\Delta^\J)$ $(k-n)$-tuples $\lambda(\tau) = (\lambda_1(\tau), \ldots, \lambda_{k-n}(\tau)) \in \Puis^{k-n}$ such that $\val(\lambda_i(\tau)) = 0, i = 1, \ldots, k-n$ and 
$$ A (r_1(\tau) \lambda(\tau)^{P''_{:,1}}, \ldots, r_k(\tau) \lambda(\tau)^{P''_{:,k}} )^\top + b = 0,$$
or $\sum_{j=1}^k A_{ij} r_j(\tau) \lambda(\tau)^{P''_{:,j}} + b_i = 0, i = 1, \ldots, k-n$. 
The solutions $\lambda(\tau)$ with $\val(\lambda_i(\tau)) = 0, i = 1, \ldots, k-n$ are given by 
$$\lambda(\tau) = (\ell_1, \ldots, \ell_{k-n}) + \text{ higher order terms},$$
where $(\ell_1, \ldots, \ell_{k-n}) \in (\CC^*)^{k-n}$ is a solution of 
\begin{equation} \label{eq:faceteq}
\sum_{j\in \J} A_{ij} r_j(0) \lambda^{P''_{:,j}} + b_i = 0, \quad i = 1, \ldots, k-n.
\end{equation}
Indeed, by \eqref{eq:valuations} the terms where $j \in \J$ correspond to the facet with facet normal $(0, \ldots, 0, 1)$ on the lower hull of the lifted point set $\{ (P''_{:,j}, \val(r_j(\tau)) ) \}_{j=1,\ldots,k} \subset \RR^{k-n+1}$, see e.g.\ \cite[Section 2]{HS95}. 
By Theorem \ref{thm:kush} and the fact that $r_j(0) \neq 0$ for all $j\in \J$, \eqref{eq:faceteq} has $(k-n)!\Vol(\Delta^\J)$ solutions for almost all choices of $A, b$. 

It remains to show that each of these solutions $\lambda(\tau)$ gives a different $k$-tuple $z(\tau)$ given by $(r_1(\tau) \lambda(\tau)^{P''_{:,1}}, \ldots, r_k(\tau) \lambda(\tau)^{P''_{:,k}})$. For $\varepsilon > 0$, let $\U_\varepsilon = \{ \tau \in \CC ~|~ |\tau| < \varepsilon \}$ and note that there exists $\varepsilon > 0$ such that for any two solutions $\lambda(\tau)$ and $\mu(\tau)$, $\lambda(\tau^*) \neq \mu(\tau^*)$ for all $\tau^* \in \U_\varepsilon$  and $r_i(t^*) \neq 0$ for all $\tau^* \in \U_\varepsilon \setminus \{ 0\}$. Since $P'': \ZZ^k \rightarrow \ZZ^{k-n}$ is surjective, this implies 
$$( r_1(\tau^*) \lambda(\tau^*)^{P''_{:,1}}, \ldots, r_k(\tau^*) \lambda(\tau^*)^{P''_{:,k}} ) \neq  ( r_1(\tau^*) \mu(\tau^*)^{P''_{:,1}}, \ldots, r_k(\tau^*) \mu(\tau^*)^{P''_{:,k}} ) , \quad \text{for } \tau^* \in \U_{\varepsilon} \setminus \{0\}.$$

Using an analogous argument, we see that in the case where $\Cl(X)$ has torsion, each irreducible component of $G \cdot r(\tau)$ contributes $(k-n)!\Vol(\Delta^\J)$ $k$-tuples $z(\tau)$. 
\end{proof}

\begin{example}\label{ex:hirzebruch3}
We continue Example \ref{ex:hirzebruch2} by showing how the orbit $G\cdot z$ `degenerates' from a degree 3 surface to a plane as $z$ moves into $D_4$. We consider the equations defining the closure $\overline{G \cdot z}$ in $\PP^4$. It can be seen from \eqref{eq:orbithirz} that for $z \in (\CC^*)^4$,
\begin{equation} \label{eq:orbiteq}
 \overline{G \cdot z} = V_{\PP^4}(z_3x_1 - z_1x_3, ~ z_4x_1^2x_2 - z_1^2z_2 x_0^2 x_4),
 \end{equation}
where $x_1, \ldots, x_4$ are the Cox variables and $x_0 = 0$ is the hyperplane `at infinity' in the total coordinate space. If we set $r(\tau) = (r_1(\tau), r_2(\tau),r_3(\tau),r_4(\tau)) = (z_1,z_2,z_3,\tau)$, the variety $\overline{G \cdot r(\tau)}$ degenerates to $V_{\PP^4}(z_3x_1 - z_1x_3, x_0^2 x_4)$ for $\tau \rightarrow 0$, which is the union of a double plane at infinity and the plane $\overline{G \cdot  (z_1,z_2,z_3,0)}= V_{\PP^4}(z_3x_1 - z_1x_3, x_4)$. This means that if we slice \eqref{eq:orbiteq} with a general plane $L$ and we let $z_4 \rightarrow 0$, two out of three intersection points drift off to infinity and the other one lands on the orbit $G \cdot (z_1,z_2,z_3,0)$. 

Applying the same reasoning for $z_2 \rightarrow 0$, we see that \eqref{eq:orbiteq} degenerates to $V_{\PP^4}(z_3x_1 - z_1x_3,x_1^2 x_2)$, which is the union of the plane $V_{\PP^4}(x_1,x_3)$ with multiplicity 2 and the plane $\overline{G \cdot (z_1,0,z_3,z_4)} = V_{\PP^4}(z_3x_1 - z_1x_3, x_2)$. Note that the intersection of the first component $V_{\PP^4}(x_1,x_3)$ with $\CC^4$ is a component of the base locus $Z \subset \CC^4$. This means that if we slice \eqref{eq:orbiteq} with a general plane $L$ and we let $z_2 \rightarrow 0$, two out of three intersection points move towards the base locus and only one of them lands on the orbit $G \cdot (z_1,0,z_3,z_4)$.
\end{example}
Combining Lemma \ref{lem:representative}, Remark \ref{rem:power} and Theorem \ref{thm:boundary}, we see that if $\z_i(0) \in \pi((\CC^*)^\J)$, then a fraction of $\Vol(\Delta^\J)/\Vol(\Delta^{\{1,\ldots,k\}})$ of the representatives $z_{ij}(\tau)$ of $\z_i(\tau)$ will travel to $\pi^{-1}(\z_i(0))$ as $\tau \rightarrow 0$. This means that, although for the purpose of tracking paths in $T \subset X$ it suffices to track only one representative, at the very end of the tracking process we may have to switch representatives in order to find homogeneous coordinates of $\z_i(0)$. We propose to track only one representative per path in $X$ for $\tau \in [\tau^*, 1]$, where $\tau^*$ is `close' to 0, e. g. $\tau^* = 0.1$. At $\tau = \tau^*$, we initialize a specialized \alert{endgame} which tries to finish the path by switching representatives at $\tau = \tau^*$ until we land on a point in $\pi^{-1}(\z_i(0))$. We make this procedure explicit in Algorithm \ref{alg:endgame}.

\begin{algorithm}\caption{A specialized endgame for Cox homotopies}\label{alg:endgame}
\begin{algorithmic}[1]\Procedure{EndGame}{$(\calH,\L), 0 < \tau^* \leq 1, z \in V_{(\CC^*)^k}(\calH(x;\tau^*)) \cap L$}
\State found $\gets$ false
\While{ found $==$ false}
\State obtain $z_{\text{target}}$ by tracking $(\calH,\L)$ for $\tau \in [0,\tau^*]$ with starting solution  $z$ \label{line:tracking}
\If{$z_{\text{target}} < \infty$ and $z_{\text{target}} \notin Z$}
\State found $\gets$ true
\Else
\State $z \gets$ \textsc{SwitchRepresentative}$(z)$ \label{line:switchrep}
\EndIf
\EndWhile
\State \textbf{return} $z_{\text{target}}$ \Comment{A set of Cox coordinates for $\lim_{\tau \rightarrow 0^+} \z_i(\tau) \in V_X(\calF)$}
\EndProcedure
\end{algorithmic}
\end{algorithm}
A few comments are in order to clarify Algorithm \ref{alg:endgame}. First of all, note that Theorem \ref{thm:boundary} guarantees that the endgame terminates and that the output $z_{\text{target}}$ is such that $\pi(z_{\text{target}}) = \z_i(0)$ if $z = z_{ij}(\tau^*)$ for some $j$. In line \ref{line:tracking} we assume that the output of the tracking algorithm is $\infty$ in case the path diverges, and to check whether $z_{\text{target}} \in Z$ one can see if the residual with respect to the monomial generators of the irrelevant ideal $B$ is adequately small (using some sensible heuristic). In line \ref{line:switchrep}, the routine  \textsc{SwitchRepresentative} finds another representative $z'$ of $G \cdot z$ satisfying $\L(z') = 0$.
In our implementation, we may either enumerate representatives all at once, or dynamically by tracking monodromy loops on
$$
\L(z_1 \lambda^{P''_{:,1}}, \ldots, z_k  \lambda^{P''_{:,k}}) = \left ( \sum_{j = 1}^k A_{ij} z_j \lambda^{P''_{:,j}} + b_i \right)_{i = 1, \ldots, k-n} = 0
$$
with seed $\lambda_0 = (1,\ldots, 1)$ and setting $z' = (z_1 \lambda^{P''_{:,1}}, \ldots, z_k  \lambda^{P''_{:,k}})$ for any solution $\lambda \neq \lambda_0$. In practice one should check that $z'$ is a representative that has not been used before. 
In principal, though less straightforward to implement, the polyhedral homotopy could also be used in such an incremental strategy (cf.~\cite{mizutanimixedcell2007}).
Still, monodromy may be preferable if computing mixed cells is a bottleneck.
We refer to~\cite{duffmonodromy2019} for more details on monodromy and efficiency considerations.

\begin{remark}
In the case of (multi)projective homotopies, $\Vol(\Delta^\J) = 1/(k-n)!$ and $s_\J = 1$ are constant for all $\J$, meaning that in this case all representative paths will land on $\pi^{-1}(\z_i(0))$ for $\tau \rightarrow 0$. 
\end{remark}

\subsection{Solving equations on $X$}\label{subsec:algorithms}

Let $\f_1, \ldots, \f_n \in \CC[M]$ and let $\A = (\A_1, \ldots, \A_n)$ be the corresponding supports ($\A_i \subset M)$. In this subsection, we describe an algorithm for computing the solutions defined by $\calFhat = (\f_1, \ldots, \f_n) = 0$ on $X$, where $X = X_\Sigma$ is the $n$-dimensional toric variety coming from $\A$ as in Subsection \ref{subsec:homogenization}. That is, the algorithm computes all isolated points of $V_X(\calF)$, where $\calF$ is obtained from $\calFhat$ by homogenizing. Here, `computing' a point $\z \in X$ means computing numerical approximations of a set of Cox coordinates of $\z$ in $\CC^k \setminus Z$. We assume that we are given a system of affine start equations $\calGhat = (\g_1, \ldots, \g_n)$ such that $\g_i$ has support $\A_i$ and $|V_T(\calGhat)|$ consists of the BKK number $\D$ many starting solutions $\{\z_i(1)\}_{i=1, \ldots, \D}$, which are also given. As per usual, we denote $\calG$ for the homogenized start system. For a generic linear space $L = \L^{-1}(0)$ of $\CC^k \setminus Z$, we consider the Cox homotopy $(\calH,\L)$ where $\calH(x;\tau) = (1-\tau)\calG + \gamma \tau \calF$ and $\gamma \in \CC$ is either a random complex constant or $\gamma = 1$ when $\calG$ is sufficiently generic. Using insights from the previous sections, Algorithm \ref{alg:mainalg} is now immediate. 
\begin{algorithm}\caption{solve $\calFhat = 0$ on $X$}\label{alg:mainalg}
\begin{algorithmic}[1]\Procedure{SolveViaCoxHomotopy}{$\calFhat, \calGhat, \{\z_1(1), \ldots, \z_\D(1) \}, \tau_{\text{EG}}$}
\State $\calF, \calG \gets$ homogenize $\calFhat, \calGhat$
\State $\calH \gets  (1-\tau)\calG + \gamma \tau \calF$
\State $\L \gets$ random affine map $Ax + b$ \label{line:L}
\State $\{z_1(1),\ldots, z_\D(1) \} \gets$ \textsc{HomogenizeStartingsolutions($\{\z_1(1), \ldots, \z_\D(1) \}, \calG, \L$)} \label{line:homog}
\State $\{ z_1(\tau_{\text{EG}}), \ldots, z_\D(\tau_{\text{EG}}) \} \gets$ track $\{z_1(1),\ldots, z_\D(1) \}$ along $(\calH, \L)$ for $\tau \in [\tau_{\text{EG}},1]$ \label{line:track2}
\For{$i = 1, \dots, \D$}
\State $z_{i,\text{target}} \gets$ \textsc{EndGame}($(\calH,\L),\tau_{\text{EG}},z_i(\tau_{\text{EG}}))$) \label{line:endgame}
\EndFor
\State \textbf{return} $\{z_{1,\text{target}}, \ldots, z_{\D,\text{target}} \}$ \Comment{A set of Cox coordinates for each point in $V_X(\calF)$}
\EndProcedure
\end{algorithmic}
\end{algorithm}
In line \ref{line:homog} of this algorithm, the given starting solutions $\{\z_1(1), \ldots, \z_\D(1) \}$ are lifted to the points $\{z_1(1),\ldots, z_\D(1) \}$ in the total coordinate space in such a way that $\pi(z_i(1)) = \z_i(1)$ and $\L(z_i(1)) = 0, i = 1, \ldots, \D$. This is done using Algorithm \ref{alg:homogenizestart}, which we discuss below. 
In line \ref{line:track2} of Algorithm \ref{alg:mainalg}, the homogenized starting solutions are tracked for $\tau$ going from 1 to $\tau_{\text{EG}}$, which is a parameter indicating where the \alert{endgame operating region} $ \tau \in [0, \tau_{\text{EG}})$ starts. We propose $\tau_{\text{EG}} = 0.1$ as a default value. The tracking can happen in only $n$ instead of $k$ variables, by setting $x = \hat{x} + K y$, where $\hat{x} \in \CC^k$ is any point in $L$ and $K$ is a matrix whose columns span $\ker A$, where $A$ is the matrix from line \ref{line:L}. Line \ref{line:endgame} uses Algorithm \ref{alg:endgame}. \\

We now discuss what happens in line \ref{line:homog} in more detail. First, a set of points $\tilde{z}_i \in (\CC^*)^k$ satisfying $\pi(\tilde{z}_i(1)) = \z_i(1)$ is computed. Since $\z_i(1) = (t_{i1}, \ldots, t_{in}) \in T = (\CC^*)^n, i = 1, \ldots, \D$,we must have $\tilde{z}_i(1)^{F_{j,:}} = t_{ij}, j = 1, \ldots, n$ by \eqref{eq:monmap}. Writing $(v_{i1}, \ldots, v_{ik})$ for the (unknown) coordinates of $\tilde{z}_i(1)$ on $(\CC^*)^k$, we get the system of binomial equations
\begin{equation} \label{eq:binomeq}
v_{i1}^{F_{j,1}} v_{i2}^{F_{j,2}} \cdots v_{ik}^{F_{j,k}} = t_{ij}, \quad i = 1, \ldots, \D, \quad  j = 1, \ldots, \D.
\end{equation}
Taking $\log(\cdot)$ on both sides (using any choice of branch) gives 
\begin{equation} \label{eq:logeq}
F v_{\log} = t_{\log},
\end{equation}
where $F$ is the facet matrix, $(v_{\log})_{ij} = \log v_{ji}$ and $(t_{\log})_{ij} = \log t_{ji}$. It is clear that a solution $v_{\log}$ of the linear equations \eqref{eq:logeq} gives a solution $v_{ij} = \exp((v_{\log})_{ji})$ to \eqref{eq:binomeq}. Let $\tilde{F} = F_{:,\{i_1, \ldots, i_n\}} = [u_{i_1} ~ \cdots ~ u_{i_n}]$ be an invertible submatrix of $F$, consisting of the ray generators indexed by $\{i_1, \ldots, i_n\}$. Such a matrix $\tilde{F}$ exists, since $\Sigma$ is complete. A solution to \eqref{eq:logeq} is given by 
$$(v_{\log})_{\{i_1, \ldots, i_n\},:} = \tilde{F}^{-1} t_{\log}, \qquad (v_{\log})_{\{1, \ldots, k\} \setminus \{i_1, \ldots, i_n\},:} = 0.$$
In order to reduce rounding errors in the computation of $\tilde{F}^{-1} t_{\log}$, it is favorable to pick a \alert{well-conditioned} submatrix $\tilde{F}$. This can be done, for instance, using a strong rank-revealing QR factorization \cite{gu1996efficient}. The obtained solutions $\tilde{z}_i(1)$ satisfy $\pi(\tilde{z}_i(1)) = \z_i(1)$, and hence $\calG(\tilde{z}_i(1)) = 0$. It remains to track the $\tilde{z}_i(1)$ through the $G$-orbit $\pi^{-1}(\z_i(1))$ to obtain $z_i(1)$ satisfying both $\calG(z_i(1)) = 0$ and $\L(z_i(1)) = 0$. For that, note that the points $\tilde{z}_i(1)$ satisfy the (very non-generic) linear conditions $\tilde{z}_i(1) \in \L_1^{-1}(0)$, where $\L_1(x) = (x_i - 1)_{i \in \{1, \ldots, k\} \setminus \{i_1, \ldots, i_n\} }$. Therefore, we can track the homotopy $(\calG, \gamma \tau \L_1 + (1-\tau) \L)$ which intersects $V_{\CC^k \setminus Z}(\calG)$ with a moving linear space for $\tau$ going from 1 to 0, with starting solutions $\tilde{z}_i(1)$. 
Homotopies with a moving linear space are fundamental in numerical algebraic geometry, particularly in homotopy membership testing and monodromy~\cite[\S 15.4]{SW05}.
We summarize this discussion in Algorithm \ref{alg:homogenizestart}.
\begin{algorithm}\caption{Lift a set of affine starting solutions in $T$ to $(\CC^*)^k$}\label{alg:homogenizestart}
\begin{algorithmic}[1]\Procedure{HomogenizeStartingSolutions}{$\{\z_1(1), \ldots, \z_\D(1) \},\calG, \L$}
\State $\{i_1, \ldots, i_n\} \gets $ a subset of indices in $\{1,\ldots, k\}$ such that $\tilde{F} = F_{:, \{ i_1, \ldots, i_n \}}$ is invertible
\State $\{\tilde{z}_1(1), \ldots, \tilde{z}_\D(1) \} \gets $ a solution of \eqref{eq:binomeq} obtained via \eqref{eq:logeq}
\State $\L_1(x) \gets (x_i - 1)_{i \in \{1, \ldots, k\} \setminus \{i_1, \ldots, i_n\} }$
\State $\LL(x;\tau) \gets  \gamma \tau \L_1(x) + (\tau - 1) \L(x)$
\State $\{z_1(1), \ldots, z_\D(1) \} \gets$ track $\{\tilde{z}_1(1), \ldots, \tilde{z}_\D(1) \}$ along $(\calG, \LL(x;\tau))$ for $\tau \in [0,1]$
\State \textbf{return} $\{z_1(1), \ldots, z_\D(1) \} $ \Comment{A set of points in $V_{\CC^k \setminus Z}(\calG, \L)$}
\EndProcedure
\end{algorithmic}
\end{algorithm}

\subsection{Orthogonal slicing in $\CC^k \setminus Z$} \label{subsec:orthslice}

In the case where $X = \PP^n$, the linear part $\L$ of the Cox homotopy $(\calH,\L)$ is a single linear equation in the $x$-variables ($k = n+1$ and the orbits have dimension 1). As pointed out in Remark \ref{rem:movinglinspace}, we may let $\L(x;\tau)$ depend on the continuation parameter $\tau$. The points in $\CC^{n+1} \setminus \{0\}$ satisfying $\L(x;\tau) = 0$ for $\tau \in (0,1]$ lie on a moving hyperplane. This can be thought of as a continuously varying \alert{affine patch} in which the homotopy is being tracked. In \cite{hauenstein2018adaptive}, the authors propose several adaptive strategies for choosing this patch. One natural choice they propose is that of an \alert{orthogonal patch} (see \cite[Subsection 3.2]{hauenstein2018adaptive}). In this subsection, we discuss how this can be done quite naturally in the total coordinate space of any compact toric variety $X$. \\ 
Let $z \in (\CC^*)^k$ and consider the corresponding orbit $G \cdot z$. Locally (and if $\Cl(X)$ is free, even globally), this orbit is parametrized by 
$$( \lambda^{P_{:,1}''} z_1 , \ldots, \lambda^{P_{:,k}''}z_k ), \quad \lambda \in \CC^{k-n},$$
where $P''$ comes from the Smith normal form of $F^\top$, see Lemma \ref{lem:orbitparam}. The tangent space to the orbit at $( \lambda^{P_{:,1}''} z_1 , \ldots, \lambda^{P_{:,k}''}z_k )$ is parametrized by 
$$ z + \sum_{i=1}^{n-k} c_i \frac{\partial }{\partial \lambda_i}  ( \lambda^{P_{:,1}''} z_1 , \ldots, \lambda^{P_{:,k}''}z_k ) , \quad c_i \in \CC,$$
where we view $z$ as a row vector of length $k$. 
For $\lambda = (1,\ldots, 1)$, we find that the tangent space to the orbit at $z$ is given by the simple expression
$$ z + c^\top P'' \text{diag}(z_1,\ldots,z_k) , \quad c \in \CC^{k-n},$$
where $\text{diag}(z_1,\ldots,z_k)$ is a diagonal $k \times k$ matrix with the coordinates of $z$ on its diagonal. It follows that $x$ is in the normal space to the orbit $G \cdot z$ at $z$ if and only if $x-z$ is orthogonal to the rows of $P'' \text{diag}(z_1,\ldots,z_k)$. \\
For a smooth path $(z(\tau),\tau) \in (\CC^k \setminus Z) \times (0,1]$ satisfying $\calH(z(\tau),\tau) = 0$, we define
$$ \L(x;\tau) = \text{conj}(P'' \text{diag}(z_1(\tau),\ldots,z_k(\tau))) (x - z(\tau)),$$
where $\text{conj}(\cdot)$ takes the (entry-wise) complex conjugate. This suggests Algorithm \ref{alg:orthslice} for tracking one path in the total coordinate space of $X$, using orthogonal slicing for collecting representatives on the orbits. 
\begin{algorithm}\caption{Track one path in the total coordinate space using orthogonal slicing}\label{alg:orthslice}
\begin{algorithmic}[1]\Procedure{TrackOrth}{$\calG,\calF,z(1),P''$}
\State $z \gets z(1)$\Comment{Starting solution: $\calG(z(1)) = 0$}
\State $\calH(x;\tau) \gets (1-\tau)\calG(x) + \gamma \tau \calF(x)$
\State $\tau^* \gets 1$
\While{$\tau^* > 0$}\Comment{The target parameter value for $\tau$ is 0 }
\State $\L(x;\tau^*) \gets \text{conj}(P'' \text{diag}(z_1, \ldots, z_k))(x - z)$
\label{step:orthpatch} 
\State $(\tilde{z}, \Delta \tau) \gets \textsc{Predict}((\calH,\L(x;\tau^*)),z,\tau^*)$ \label{step:predict} \Comment{Adaptive stepsize predictor routine}
\State $ z \gets \textsc{Correct}((\calH,\L),\tilde{z}, \tau^* - \Delta \tau)$ \label{step:correct} \Comment{Corrector routine, e.g.\ Newton iteration}
\State $ \tau^* \gets \tau^* - \Delta \tau$
\EndWhile
\State \textbf{return} $z$
\Comment{$z$ is the target solution.}
\EndProcedure
\end{algorithmic}
\end{algorithm}
The algorithm uses the blackbox functions \textsc{Predict} (line \ref{step:predict}) and \textsc{Correct} (line \ref{step:correct}) which are assumed to implement a predictor-corrector path tracking scheme, possibly (and preferably) using an adaptive step size control \cite{schwetlick1987higher,kearfott1994interval,dedieu2013adaptive,timme2019adaptive,telen2019robust}. The function \textsc{Predict} returns a point $\tilde{z}$ and a step size $\Delta \tau$ such that $\tilde{z}$ is an approximation for a solution of $(\calH(x;\tau^*-\Delta \tau), \L(x;\tau^*))$ and $\Delta \tau$ is a `safe' step size. The function \textsc{Correct} then refines $\tilde{z}$ using, for instance, Newton iteration on $(\calH(x;\tau^*-\Delta \tau), \L(x;\tau^*))$ with starting point $\tilde{z}$.

\section{Numerical examples} \label{sec:numexps}
We have implemented the algorithms presented in Section \ref{sec:mainthm} in Julia, making use of the packages Polymake.jl~\cite{KLT20} and HomotopyContinuation.jl \cite{breiding2018homotopycontinuation}. We present a selection of experiments highlighting the advantages of Cox homotopies, as discussed in the introduction. More material and the code will be made available on \url{https://mathrepo.mis.mpg.de}.
\begin{experiment}[A problem from computer vision] \label{exp:computervision}

The $8$-point problem for cameras with radial distortion~\cite{kukelova2011minimal} consists of $9$ equations in $9$ unknowns. 
Eight equations are given by
\begin{equation}
\label{eq:epipole}
\begin{pmatrix}
  p_{i,1}' & p_{i,2}' & 1 + r_i' \lambda
\end{pmatrix}
\underbrace{
  \begin{pmatrix}
    f_{1,1} & f_{1,2} & f_{1,3}\\
    f_{2,1} & f_{2,2} & f_{2,3}\\
    f_{3,1} & f_{3,2} & 1
  \end{pmatrix}
}_{\boldsymbol{F}}
\begin{pmatrix}
  p_{i,1} \\
  p_{i,2} \\
  1 + r_i \lambda
\end{pmatrix} = 0,
  \end{equation}
where the parameters $p_{i,j}, p_{i,j}'$ and $r_i, r_i'$ are known and represent distorted image coordinates and distortion radii, respectively. 
The true image coordinates are known only after the \alert{radial distortion parameter} $\lambda $ is recovered.
The matrix $\boldsymbol{F}$ is called the \alert{fundamental matrix} and satisfies the additional constraint \begin{equation}
\label{eq:detF}
\det \boldsymbol{F} = 0.
\end{equation}
For more details on the model and problem, we refer to~\cite{fitzgibbon2001simultaneous,kukelova2011minimal}.

For generic parameters $p_{i,j}, p_{i,j}'$ and $r_i, r_i',$ , the number of solutions to equations~\eqref{eq:epipole} and~\eqref{eq:detF} is exactly the BKK number $\D = 16.$
For comparison, the B\'ezout bound is $768.$
Homogenizing these equations as in Section~\ref{subsec:homogenization}, there are $26$ Cox coordinates.
In our experiment, we consider \emph{nearly degenerate systems} satisfying $q_{i,1}' = -r_{i,1}' +\epsilon $ for $\epsilon $ small.
For $\epsilon = 10^{-7},$ the \texttt{solve} command in HomotopyContinuation.jl reports $8$ solutions and $8$ paths going towards infinity.
Our implementation of Algorithm~\ref{alg:mainalg} finds a representative in $\CC^{26}$ for each endpoint of these 16 paths in the compact toric variety $X_\Sigma .$

Since the generic orbit degree $d=4583$ appearing in Theorem~\ref{thm:coxhomotopy} is quite large, we may dynamically enumerate the representatives of each path considered in Algorithm~\ref{alg:endgame} using random monodromy loops.
This proves to be far more efficient than tracking a total of $d \D = 73328$ paths.
Ignorging the pre-computation of the polytope $\Pol$ and facet matrix $F^\top,$ the entire procedure takes around 15--45s between several runs on the same machine.
Most time is spent on the endgame in Algorithm \ref{alg:endgame}.
We observe that the $8$ nearly-infinite, nearly-singular solutions in the torus $T\subset \CC^9$ are reasonably accurate in the sense of backward error.
More precisely, the residuals used in~\cite{telen2020thesis} for the $8$ regular solutions are all on the order of unit roundoff $\approx 10^{-16},$ and for the other $8$ are in the range from $10^{-7}$ to $10^{-15}.$ 
\end{experiment}
\begin{experiment}[Intersecting curves on a Hirzebruch surface] \label{exp:hirzebruch}
Consider again the Hirzebruch surface $X = \HH_2$ from Example \ref{ex:hirzebruch}. Its fan $\Sigma$ and facet matrix $F$ are shown in Figure \ref{fig:hirzebruch}. In this experiment, we illustrate the use of \alert{orthogonal slicing} (see Subsection \ref{subsec:orthslice}) as an adaptive strategy for tracking paths in a Cox homotopy. For this, we generate Laurent polynomials $\f_1, \f_2 \in \CC[t_1^{\pm 1}, t_2^{\pm 1}]$ with support $\A_1 = \A_2 = \Pol_i \cap M$, where $\Pol_i$ has facet representation given by $F^\top$ and $a_i = (0,0,10,10)^\top$ for $i = 1,2$. These Laurent polynomials are random in the sense that the coefficients of the $\f_i$ 
have been drawn from a standard normal distribution. There are 400 solutions to $\f_1 = \f_2 = 0$ in $T \subset X$.
For illustration purposes, we use an implementation of Algorithm \ref{alg:orthslice} with the naive predictor 
\begin{center}
\textsc{Predict}($(\calH,\L(x;\tau^*)), z, \tau^*) = (z, 0.001)$.
\end{center}
We choose three of the 256 paths $z(\tau)$ in the Cox homotopy randomly and keep track of the condition number of the Jacobian $J_{\tau^*} = \left. \partial_x (\calH(x;\tau^*), \L(x;\tau^*)) \right|_{x = z(\tau^*)}$ for $\tau^* = 1, 1- \Delta \tau, 1- 2 \Delta \tau , \ldots, 0$, with $\Delta \tau = 0.001$. For comparison, we track the same paths in $X$ using 10 random slices $\L_{\text{\text{rand}}}(x)$ given by $\L_{\text{\text{rand}}}(x) = A(x - z(1))$ where $A$ is a matrix with entries drawn from a complex standard normal distribution. The results are illustrated in Figure \ref{fig:hirzebruchorth}. The dashed (orange) curves are obtained by taking the geometric mean of the dotted (grey) curves, which represent the condition number of $J_\tau$ for 10 random slices. The blue curves represent the condition number of $J_\tau$ using Algorithm \ref{alg:orthslice}.
\begin{figure}
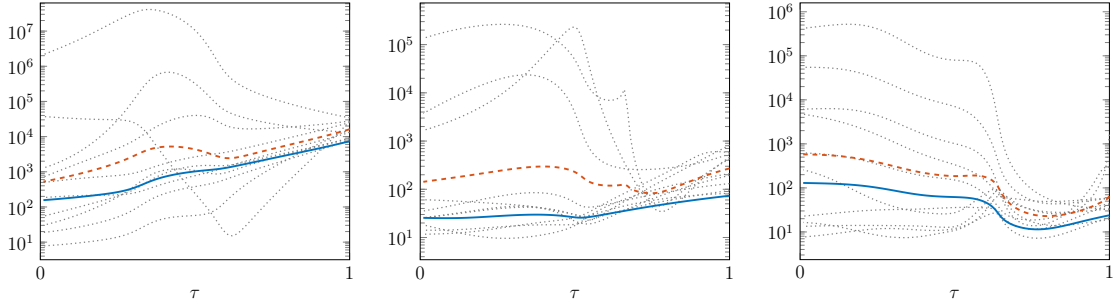

\input{orthslice1.tex}
\input{orthslice2.tex}
\input{orthslice3.tex}
\caption{Condition number of the Jacobian along 3 paths of the homotopy in Experiment \ref{exp:hirzebruch} using the orthogonal slicing strategy of Algorithm \ref{alg:orthslice} (\ref{bluecurve}) and the average condition number (\ref{orangedashedcurve}) for 10 random linear slices (\ref{greycurve}).}
\label{fig:hirzebruchorth}
\end{figure}
The figure shows that for different randomly generated $\L_{\text{\text{rand}}}(x)$, the behavior of the condition number may vary significantly. Using orthogonal slicing, we consistently obtain smaller condition numbers on average. Moreover, the computation of the orthogonal slice causes virtually no computational overhead. 
\end{experiment}

\begin{experiment}[Solving equations on a weighted projective space] \label{exp:weighted}
Consider the system of polynomial equations 
\begin{align*}
\f_1 = (3 + \varepsilon_1)t_1^2 + 7 t_1 t_2 + 7 t_2^2 + 9t_1  + 3 t_2 + 9 t_3 + 2 &= 0,\\
\f_2 = (3 + \varepsilon_2)t_1^2 + 7 t_1 t_2 + 7 t_2^2 + 5t_1 + 2 t_2 + 3t_3 + 4 &= 0, \\
\f_3 = (3 + \varepsilon_3)t_1^2 + 7 t_1 t_2 + 7 t_2^2 + 4t_1 + 8 t_2 + 4t_3 + 9 &= 0,
\end{align*}
in the variables $t_1,t_2,t_3$. The parameters $\varepsilon_i$ are assigned random complex values of modulus $10^{-12}$. The normalized volume of the Newton polytope of these equations is 4, which is equal to the number of solutions in $T = (\CC^*)^3$. However, using the command {\tt solve} in HomotopyContinuation.jl, we find only 2 solutions. The polynomials $\f_1, \f_2, \f_3$ homogenize to degree 2 elements in the Cox ring $S = \CC[x_1,x_2,x_3,x_4]$ of the weighted projective threefold $X = \PP_{1,1,2,1}$, where $x_i$ corresponds to the facet normal $e_i$ for $i = 1, 2, 3$ and $\{x_4 = 0\}$ is the divisor `at infinity'. Using Algorithm \ref{alg:mainalg}, we find all 4 solutions in $X$. We observe that the Cox coordinates $x_3$ and $x_4$ have absolute value $\approx 10^{-12}$ for two of these solutions, which means they lie close to $D_3 \cap D_4$. The corresponding points in the torus have coordinates of modulus $\approx 10^{12}$. Tracking these solutions in $(\CC^*)^3$ causes premature truncation in the standard polyhedral homotopy. 

We now test our approach on a larger example with similar behavior. The system is given by $\f_1 = \cdots = \f_5 = 0$, where the $\f_i$ have Newton polytope $P = \{ m \in \RR^5 ~|~ F^\top m + a \geq 0 \}$, with 
$$ F^\top = \begin{bmatrix}
1 & 0 & 0 & 0 & 0 \\
0 & 1 & 0 & 0 & 0 \\ 
0 & 0 & 1 & 0 & 0 \\ 
0 & 0 & 0 & 1 & 0 \\ 
0 & 0 & 0 & 0 & 1 \\
-1 & -2 & -2 & -2 & -4
\end{bmatrix}, \qquad a = \begin{bmatrix}
0\\
0\\ 
0\\
0\\
0\\
12
\end{bmatrix}.$$
These correspond to degree 12 equations with 7776 solutions in the weighted projective space $X = \PP_{1,2,2,2,4,1}$. We manipulate the coefficients such that 216 of these solutions lie near $D_5 \cap D_6$, meaning that their Cox coordinates $x_5$ and $x_6$ have modulus $\approx \varepsilon$. For $\varepsilon = 10^{-7}$, the polyhedral homotopy misses about $200$ solutions. Algorithm \ref{alg:mainalg} finds all 7776 solutions in $T$ within 7 minutes and 40 seconds on a 16 GB MacBook Pro machine with an Intel Core i7 processor working at 2.6 GHz.
\end{experiment}

\begin{experiment}[Equations on a Bott-Samelson variety] \label{exp:BS} 
Defining $B \subseteq GL_3$ as the subgroup of upper-triangular matrices, $GL_3/B$ is birational to a Bott-Samelson variety. We draw a random complex square polynomial system on $GL_3/B$ from the Khovanskii basis $\mathcal{B} =  \{1,x,y,z,xz,yz,x(xz+y),y(xz+y)\}$~\cite{Anderson}. That is, we consider the system $$\hat{f}_i = c_{i,1}\;1 + c_{i,2}\;x + c_{i,3}\;y + c_{i,4}\;z + c_{i,5}\;xz + c_{i,6}\;yz + c_{i,7}\;x^2z + c_{i,8}\; xyz + c_{i,7}\;xy + c_{i,8}\;y^2 ,$$
where $i \in \{1,2,3\}$ and $ c_{i,j} \in \CC$ for $i = 1,2,3$ and $j = 1, \dots, 8 $. 
\begin{figure}
\tdplotsetmaincoords{70}{110}
\begin{tikzpicture}[baseline=(A.base),scale=1.5,tdplot_main_coords]
\coordinate (A) at (0,0,0);
\coordinate (B) at (1,0,0);
\coordinate (C) at (2,0,1);
\coordinate (D) at (0,0,1);
\coordinate (E) at (0,2,0);
\coordinate (F) at (0,1,1);
\coordinate (G) at (1,1,1);
\coordinate (H) at (1,1,0);
			
\draw[fill opacity=0.2,fill = mycolor2,] (A)--(B)--(C)--(D)--cycle;
\draw[fill opacity=0.6,fill = mycolor2,] (A)--(D)--(F)--(E)--cycle;
\draw[fill opacity=0.6,fill = mycolor2,] (A)--(E)--(H)--(B)--cycle;
\draw[fill opacity=0.6,fill = mycolor1,] (G)--(C)--(H)--(E)--cycle; 
\draw[fill opacity=0.6,fill = mycolor2,] (C)--(B)--(H)--cycle;
\draw[fill opacity=0.6,fill = mycolor2,] (F)--(G)--(E)--cycle;
\draw[fill opacity=0.6,fill = mycolor2,] (D)--(C)--(G)--(F)--cycle;
\end{tikzpicture} 
\hspace{1.5cm}
\tdplotsetmaincoords{70}{110}
\begin{tikzpicture}[baseline=(A.base),scale=1.5,tdplot_main_coords]
\coordinate (A) at (0,0,0);
\coordinate (B) at (1,0,0);
\coordinate (C) at (1,0,1); 
\coordinate (D) at (0,0,1); 
\coordinate (E) at (0,2,0); 
\coordinate (F) at (0,1,1); 
\coordinate (G) at (1,1,1); 
\coordinate (H) at (1,1,0);
			
\draw[fill opacity=0.2,fill = mycolor2,] (A)--(B)--(C)--(D)--cycle;
\draw[fill opacity=0.6,fill = mycolor2,] (A)--(D)--(F)--(E)--cycle;
\draw[fill opacity=0.6,fill = mycolor2,] (A)--(E)--(H)--(B)--cycle;
\draw[fill opacity=0.6,fill = mycolor2,] (F)--(C)--(H)--(E)--cycle; 
\draw[fill opacity=0.6,fill = mycolor2,] (C)--(B)--(H)--cycle;
\draw[fill opacity=0.6,fill = mycolor2,] (F)--(C)--(D)--cycle; 
\end{tikzpicture}
\caption{The Newton polytope $\Pol_i$ (left) of the equations in Experiment \ref{exp:BS} and the Newton-Okounkov body associated to $\mathcal{B}$ (right). The face of $\Pol_i$ whose corresponding system has solutions at infinity is highlighted in blue.}
\label{fig:BS}
\end{figure}
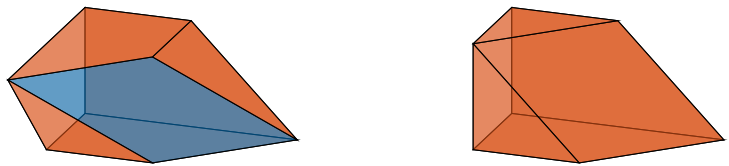
The BKK bound for $\hat{\mathcal{F}} = (\hat{f}_1, \hat{f}_2, \hat{f}_3)$ is $10$, which is given the by the normalized volume of the Newton polytope $\mathscr{P}_i$ of $\hat{f_i}$. The number of solutions to $\hat{\mathcal{F}}=0$, however, is known to be equal to the normalized volume of the Newton-Okounkov body associated to $\mathcal{B}$, which is six. Both the Newton polytope $\mathscr{P}$ and the Newton-Okounkov body associated to $\mathcal{B}$ are depicted in Figure~\ref{fig:BS}. 
This deficient root count (six) with respect to the BKK bound (ten) suggests that there are solutions at infinity, which would correspond to solutions of face system(s) of $\hat{\mathcal{F}}$. We can find solutions to face systems via the Cox homotopy.

We homogenize $\hat{\mathcal{F}}$ using the normal fan of $\mathscr{P}$, whose rays are recorded by the columns of
 \[F = \begin{bmatrix} -1 &  0 & 0 & -1 & 1 & 0  & 0  \\ 
                                 -1 &  0 & 1 &  0 & 0 &  0 & -1  \\ 
                                  0 & -1 & 0 &  1 & 0 &  1 & -1\end{bmatrix}.\]
Using the Cox homotopy, we find that the homogenization of $\hat{\mathcal{F}}$, $\mathcal{F}$, has six nonsingular solutions in the torus and four singular solutions whose first Cox coordinate is zero. This indicates that there are solutions to the face system of $\hat{\mathcal{F}}$ associated to the ray $(-1, -1, 0)$. This face system, which is given by
$c_{i,7}\; x(xz+y) + c_{i,8}\; y(xz+y) = 0$  for $i  = 1, 2, 3$, has infinitely many solutions along the curve $C$ defined by $y = -xz$. Therefore, the root deficiency of $\hat{\mathcal{F}}$ is explained by this curve at infinity. 
The orbit degree $\deg(\overline{G\cdot z})$ in this example is five for $z \in (\CC^*)^k$. The degree $\deg(\overline{G \cdot z})$ for $z \in \pi^{-1}(\z)$ drops to three for general points on $D_1 = V_X(x_1)$. Using the specialized endgame in Algorithm \ref{alg:endgame}, we consistently find the homogeneous coordinates of four points on $C$. Although Lemma \ref{lem:representative} only accounts for isolated points in $V_X(\calF)$, this example shows that the Cox homotopy can also detect positive dimensional components on $X \setminus T$. This strategy provides an important first step towards generalizing \cite{bates2019excess} for computing \alert{numerical irreducible decompositions} in $X$.
\end{experiment}

\section{Conclusion}
We have introduced Cox homotopies for tracking solution paths of sparse polynomial systems in a compact toric variety $X$. The algorithm makes explicit use of the construction of $X$ as a GIT quotient of a quasi-affine space by the action of a reductive group $G$. We have described the degree of (the closure of) $G$-orbits in this construction in terms of volumes of orbit polytopes, lattice indices, and invariant factors. As we have shown in our experiments, Cox homotopies, as a generalization of (multi)projective homotopies, allow us to deal with solutions on or near the boundary of the dense torus in $X$ in an elegant way, avoiding premature truncation of solution paths and providing insight in the solution structure `at infinity'. It inherits the advantage of polyhedral homotopies that only the BKK number many paths need to be tracked. Experiments show that our algorithms provide the first steps towards performing numerical irreducible decomposition in $X$. 
\section*{Acknowledgements}
The first steps in this research project were taken at the conference Ideals, Varieties and Applications, organized in June 2019 to honor David Cox and his influence in several areas of Commutative Algebra and Algebraic Geometry. Part of the research was conducted during the MATH+ Thematic Einstein Semester on Algebraic Geometry, Varieties, Polyhedra, Computation in Berlin. We want to thank the organizers of these meetings for making this collaboration possible. We also want to thank the Max Planck Institute for Mathematics in the Sciences for its resources used in aiding this collaboration. We are grateful to Sascha Timme for his valuable help with Julia and HomotopyContinuation.jl and to Frank Sottile for fruitful discussions and useful suggestions.
\bibliographystyle{abbrv}
\bibliography{bibl}

\end{document}